\DeclareMathOperator*{\esssup}{esssup}
\def\qc{quasi\-conformal }
\def\T{Teich\-m\"ul\-ler }
\def\bmu{\emu_Z}
\def\nmu{\|\mu\|_\infty}
\def\nmu{\|\mu\|_\infty}
\def\wt{\widetilde}
\def\vp{\varphi}
\def\vp{\varphi}
\def\pa{\partial}
\def\ov{\overline}
\def\de{\Delta}
\def\q1s{Q^1(S)}
\def\ts{T(S)}
\def\zs{Z(S)}
\def\azs{AZ(S)}
\def\azde{AZ(\de)}
\def\ats{AT(S)}
\def\qde{Q^1(\de)}
\def\qds{Q^1_d(S)}
\def\qdde{Q^1_d(\de)}
\def\tde{T(\de)}
\def\atde{AT(\de)}
\def\dat{d_{AT}}
\def\oo{[[0]]}
\def\oaz{\oo_{AZ}}
\def\emu{[\mu]}
\def\emb{[\mu]_Z}
\def\azmu{[[\mu]]_{AZ}}
\def\mut{[[\mu]]}
\newtheorem{theorem}{Theorem}
\newtheorem{cor}{Corollary}[section]
\newtheorem{lemma}{Lemma}[section]
\newtheorem*{rem}{Remark}
\begin{document}

\title{\bf{On nonuniqueness  of
geodesics in asymptotic \T space
 }}
\author{GUOWU YAO
}
\date{}
 %\date{March 31,  2015}
\maketitle
\begin{abstract}\noindent

 In an infinite-dimensional \T space, it is known that
  the  geodesic connecting two points can be unique or not.
   In this paper, we study the situation on the geodesic  in the universal
    asymptotic \T space $\atde$.  We introduce the notion of substantial point and non-substantial point in $\atde$. The set of all non-substantial points is open and dense in $\atde$. It is shown that
there are infinitely many geodesics joining a non-substantial point to the
basepoint.  Although we have difficulty in dealing with  the substantial points,
 we give an example to show that there are infinitely many geodesics connecting certain
     substantial point  and the basepoint. It is also shown that there  are always infinitely many straight lines containing two points in $\atde$.
     Moreover, with the help of the Finsler structure on the asymptotic \T space, a variation formula for the asymptotic \T  metric is obtained.

     %Moreover, it is  shown that there exist  closed geodesics in $\ats$ which indicates that the spheres in $\ats$
     %is not convex.
     %The infinitesimal version for the tangent space of $\ats$ is also  considered.
\end{abstract}
\renewcommand{\thefootnote}{}

\renewcommand{\thefootnote}{}
\footnote{The   work  was
supported by the National Natural Science Foundation of China
(Grant No. 11271216).}

\footnote{{2010 \it{Mathematics Subject Classification.}} Primary
30C75,  30C62.} \footnote{{\it{Key words and phrases.}} \T space, asymptotic \T space,
  geodesic, substantial boundary point.}

\begin{centering}\section{\!\!\!\!\!{. }
Introduction}\label{S:intr}\end{centering}

Let $S$ be a hyperbolic Riemann surface,  that is,  it is  covered
by a holomorphic map: $\varpi: \de\to S$,  where $\de=\{|z|<1\}$ is the
open unit disk. Let $T(S)$ be the \T space of $S$.   A quotient space of the  \T space $T(S)$, called the asymptotic \T
space and denoted by $AT(S)$, was introduced by Gardiner and
Sullivan (see~\cite{GS} for $S=\de$ and by Earle, Gardiner and
Lakic for arbitrary hyperbolic
$S$~\cite{EGL0,EGL,GL}).

 $\ats$ is interesting only when $\ts$ is infinite dimensional,
which occurs when $S$ has border or when $S$ has infinite
topological type, otherwise, $\ats$ consists of just one point. In
recent years, the asymptotic spaces are extensively studied, for examples, one can refer to
\cite{EGL0,EGL,EMS, Fuj, Mats, Mi, Yao}.

We shall use some geometric terminologies  adapted from \cite{Bus}
by Busemann. Let $X$ and $Y$ be metric spaces. An isometry of $X$
into $Y$ is a distance preserving map. A straight line in $Y$ is a
(necessarily closed) subset $L$ that is  an isometric image of the
real line $\mathbb{R}$.   A geodesic in $Y$ is an isometric image of
a non-trivial compact interval of $\mathbb{R}$. Its endpoints are
the images of the endpoints of the interval, and we say that the
geodesic joins its endpoints.

 Geodesics play an important role in the theory
 of \T spaces.
In an finite-dimensional \T space $\ts$, there is always a unique
geodesic connecting  two points. The situation  is
substantially different in an infinite dimensional \T space (see
\cite{ELi,HuSh,Li, Li2, Li3,Ta}). Generally,  for a Strebel point,
   there is a unique geodesic connecting it and the basepoint. A natural
question is whether  the geodesic connecting  two points in $\ats$
is unique. In \cite{Fan}, by a lengthy computation Fan  gave certain examples to
show the nonuniqueness of geodesics in $\ats$.

The motivation of this paper  is to investigate the nonuniqueness of geodesic in the universal asymptotic \T space $\atde$. We introduce the notion of   substantial point and non-substantial point in $\atde$. The set of all non-substantial points is open and dense in $\atde$. The first main result is the following theorem.

\begin{theorem} \label{Th:geodesic}For every non-substantial point in $\atde$, there are
 infinitely many geodesics
joining it with the basepoint.
\end{theorem}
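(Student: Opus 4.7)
The plan is to leverage the non-substantial property of $[[\mu]]$ in $\atde$ to produce an infinite family of asymptotically extremal representatives of $[[\mu]]$, and to show that the corresponding Teichm\"uller lines project to pairwise distinct geodesics in $\atde$ joining $[[0]]$ to $[[\mu]]$.

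First, I would unpack the non-substantial hypothesis. I expect it to imply that the boundary dilatation $k = h^*([[\mu]])$ is attained by some Beltrami representative $\mu_0$ with $\|\mu_0\|_\infty = k$, and crucially that the extremal representative enjoys flexibility in how its modulus is distributed near $\pa\de$, rather than being pinned down by a single concentrating sequence of quadratic differentials.

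Second, for any such extremal representative $\nu$ of $[[\mu]]$, the curve $t \mapsto [[t\nu/k]]$ for $t \in [0,k]$ is a candidate geodesic in $\atde$ from $[[0]]$ to $[[\mu]]$. I would verify this by establishing the identity $h^*([[t\nu/k]]) = t$ for every $t \in [0,k]$, from which it follows that the AT length of the curve equals $\frac{1}{2}\log\frac{1+k}{1-k} = \dat([[0]],[[\mu]])$. The identity itself should be obtainable from the asymptotic extremality of $\nu$ combined with the Infinitesimal Delta Inequality stated earlier.

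Third, exploiting the flexibility granted by the non-substantial property, I would construct a sequence $\{\mu_n\}$ of extremal representatives of $[[\mu]]$, all of norm $k$, such that the intermediate AT classes $[[t\mu_n/k]]$ are pairwise distinct in $n$ for at least one $t \in (0,k)$. A natural candidate is to perturb $\mu_0$ on a nested sequence of regions accumulating at $\pa\de$ by variations that are asymptotically conformal at the level of the endpoint (hence preserve the AT class at $t=k$), yet genuinely shift the rescaled AT classes at intermediate times. Each such $\mu_n$ then yields its own AT geodesic via the Teichm\"uller line construction of step two.

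The main obstacle is the third step: quantitatively separating the intermediate classes $[[t\mu_n/k]]$ for distinct $n$. Two Beltrami coefficients can lie in a single AT class without their scaled counterparts doing so, but actually proving the rescaled versions fail to be AT-equivalent requires delicate control of how boundary dilatation behaves under composition and scaling of Beltrami coefficients. I expect either the Main Inequality quoted earlier or an asymptotic refinement of Reich--Strebel type estimates to be the right tool here, and this is where the bulk of the technical work will lie.
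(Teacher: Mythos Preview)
Your third step is where the proposal breaks down, and the gap is structural rather than merely technical. You want representatives $\mu_n \in [[\mu]]$ that are all asymptotically extremal of norm $k$ yet give rise to distinct linear rays $t \mapsto [[t\mu_n/k]]$. But membership in $[[\mu]]$ is a nonlinear condition on the maps $f^{\mu_n}$, whereas separation of the rescaled rays is detected linearly via degenerating sequences. A perturbation supported on regions accumulating at $\pa\de$ that is tame enough to keep $\mu_n$ inside $[[\mu]]$ will in practice also keep $t\mu_n/k$ inside $[[t\mu_0/k]]$ for every $t$; a perturbation wild enough to separate the rescaled classes will typically eject $\mu_n$ from $[[\mu]]$. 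You offer no construction that threads this needle, and neither the Main Inequality nor a Reich--Strebel estimate by itself supplies one.

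The paper avoids this tension by abandoning linear rays altogether. Fixing a \emph{single} asymptotically extremal representative $\mu$ and a non-substantial boundary point $q$ with $|\mu|\le\rho<h$ on a neighborhood $B(q)$, one chooses $\delta$ supported in $B(q)$ with $\|\delta\|_\infty<h-\rho$ and a continuous $\sigma:[0,h]\to\mathbb{R}$ with $\sigma(0)=\sigma(h)=0$ satisfying a Lipschitz-type inequality, and sets
\[
\mu_t \;=\; \frac{t}{h}\,\mu \;+\; \sigma(t)\,\delta .
\]
Since $\sigma(h)=0$, the endpoint is literally $\mu$ itself, so preserving the AT class is automatic. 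That $t\mapsto[[\mu_t]]$ is a geodesic is verified by computing the Beltrami coefficient of $f^{\mu_s}\circ(f^{\mu_t})^{-1}$ and bounding its boundary dilatation directly. To distinguish two such geodesics the paper first proves a variation formula for $d_{AT}$ (derived from the asymptotic fundamental inequalities), which yields the criterion: if $\mu^1,\mu^2$ are asymptotically extremal with $h([[\mu^1]])=h([[\mu^2]])$ and
\[
\sup_{\qdde}\limsup_{n\to\infty}\Bigl|\iint_\de(\mu^1-\mu^2)\phi_n\,dxdy\Bigr|>0,
\]
then the geodesics $[[t\mu^1]]$ and $[[t\mu^2]]$ are different. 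Taking $\sigma(t)=\alpha t$ for $t$ near $0$ and letting $\alpha$ vary, the relevant difference is a nonzero multiple of $\delta$, and the criterion is immediate once $\delta$ is chosen with positive asymptotic norm towards $q$.
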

   We  have some difficulty  in dealing with  the
   substantial points.  Nevertheless, we
   give an example to show that there are infinitely many geodesics connecting certain
     substantial point  and the basepoint which might support the conjecture that there
     are always infinitely many geodesics connecting  two points
     in $\atde$.

    In a  finite-dimensional \T space, there is a unique straight
line passing through two   points \cite{Kra}.
In an infinite-dimensional \T space, the work in \cite{ELi} shows that if and only if $\tau$ is a Strebel point,
there is a unique straight line passing through $\tau$ and the basepoint in $\ts$.
    The second main result of this paper  characterizes the nonuniqueness of straight lines containing two points in $\atde$.

    \begin{theorem}\label{Th:line} For any two points in $\atde$,  there
     are  infinitely many straight lines containing them.\end{theorem}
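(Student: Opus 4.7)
\medskip
\noindent\textbf{Proof proposal for Theorem~\ref{Th:line}.}

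\emph{Step 1: Reduction to the basepoint.} My first step is to exploit the fact that $\atde$ carries an isometric right-translation action: for each $[[\lambda]]_{AZ}\in\atde$, the map $[[\nu]]_{AZ}\mapsto[[\lambda*\nu]]_{AZ}$, where $*$ denotes the Beltrami coefficient obtained by composing normalized quasiconformal homeomorphisms, is an isometry of $(\atde,\dat)$. Given any two points $\tau_1,\tau_2\in\atde$, I translate $\tau_1$ to the basepoint $\oaz$ and reduce the problem to: for every $\azmu\in\atde\setminus\{\oaz\}$ there exist infinitely many straight lines through $\oaz$ and $\azmu$.

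\emph{Step 2: One straight line via a Teichm\"uller disk.} Set $k=\dat(\oaz,\azmu)>0$ and choose an asymptotically extremal representative $\mu$, so that $\nmu=k$. In $\tde$ the real diameter $\{[r\mu/k]:r\in(-1,1)\}$ of the Teichm\"uller disk through $[0]$ and $[\mu]$ is a bi-infinite Teichm\"uller geodesic. Because $\mu$ is asymptotically extremal, the boundary dilatation of $r\mu/k$ equals $|r|$ for every $r$, so that the projection $\tde\to\atde$ remains isometric along this diameter. Reparametrizing by $r=\tanh s$ yields a straight line $\gamma_\mu:\mr\to\atde$ with $\gamma_\mu(0)=\oaz$ and $\gamma_\mu(\mathrm{arctanh}(k))=\azmu$.

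\emph{Step 3: Producing infinitely many distinct lines.} I will vary either the initial segment or the forward continuation past $\azmu$.
(a) For non-substantial $\azmu$, I apply Theorem~\ref{Th:geodesic} to obtain infinitely many pairwise distinct geodesics $\sigma_n:[0,\mathrm{arctanh}(k)]\to\atde$ from $\oaz$ to $\azmu$. Each $\sigma_n$ corresponds to a different asymptotically extremal representative $\mu_n$ of $\azmu$, and the Teichm\"uller-disk construction of Step 2 promotes each $\sigma_n$ to a straight line $\gamma_{\mu_n}$. These straight lines are pairwise distinct because their initial segments already are.
(b) For substantial $\azmu$, I instead vary the forward continuation past $\azmu$: fixing one $\gamma_\mu$, I construct new straight lines by replacing $\gamma_\mu|_{(\mathrm{arctanh}(k),\infty)}$ with different asymptotically extremal continuations of the incoming geodesic at $\azmu$. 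Such continuations correspond to distinct asymptotically extremal representatives of points lying slightly beyond $\azmu$ along the incoming direction, and I expect an infinite family of them because $\atde$ is infinite-dimensional and the set of directions compatible with a given incoming unit vector at $\azmu$ remains rich.

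\emph{Main obstacle.} Step~3(a) is reasonably clean once Step~2 is established, because distinct initial segments force distinct bi-infinite extensions. The genuine difficulty is Step~3(b). Without the luxury of many distinct initial segments, I must show that the selected continuations past $\azmu$ really yield distinct subsets of $\atde$ rather than reparametrizations or asymptotically equivalent perturbations of a single line. This will require a delicate infinitesimal analysis near $\azmu$ based on the Finsler structure alluded to in the abstract and the associated variation formula, together with a quantitative statement to the effect that substantiality of $\azmu$ constrains only the incoming direction, not the outgoing one, so that infinitely many genuinely different straight extensions survive.
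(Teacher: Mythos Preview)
Your overall architecture matches the paper's: reduce to the basepoint by an isometry, split into the non-substantial and substantial cases, invoke Theorem~\ref{Th:geodesic} for the former, and vary the continuation past $\mut$ for the latter. (Two notational slips: you write $\azmu$, $\oaz$, which are tangent-space objects; the theorem lives in $\atde$, so use $\mut$, $\oo$. And $k$ should be $h(\mut)$, not $\dat(\oo,\mut)$; these differ by $\mathrm{arctanh}$.)

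The genuine gap is Step~3(b), as you anticipated, but the paper's resolution is far more elementary than the Finsler analysis you envision. Take a non-Strebel extremal $\mu\in\mut$ with $\nmu=h$ (Lemma~\ref{Th:nonstrebel}), fix $p\in\pa\de$ and a small $B(p)\subset\de$, and set $E=\de\setminus B(p)$. Keep $\mu_t=t\mu/h$ for $|t|\le h$, but for $|t|>h$ put
\[
\mu_t(z)=\begin{cases} t\mu(z)/h, & z\in B(p),\\ \mathrm{sgn}(t)\,\mu(z), & z\in E.\end{cases}
\]
One checks that $G_E=\{[[\mu_t]]:t\in(-1,1)\}$ is a straight line through $\oo$ and $\mut$. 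The key observation is that for $|t|>h$ the point $[[\mu_t]]$ is \emph{not} substantial: any $\zeta$ in the interior of $\pa\de\cap\pa E$ satisfies $h^*_\zeta(\mu_t)\le h<|t|=h([[\mu_t]])$. Different choices of $p$ (or of the radius of $B(p)$) therefore produce lines whose points beyond $\mut$ have \emph{different} sets of substantial boundary points, hence are distinct as subsets of $\atde$. No variation formula is needed; distinctness is read off directly from the substantial-point loci.

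A smaller issue in Step~3(a): the geodesics of Theorem~\ref{Th:geodesic} are not \T\ rays and do not correspond to different representatives of $\mut$ at the endpoint (in that construction $\sigma(h)=0$, so $\mu_h=\mu$ for every geodesic). So promoting each one to a \T\ disk $\gamma_{\mu_n}$ does not make sense. The paper instead extends every such geodesic by the \emph{same} tail $[[t\mu/h]]$ for $t\in(-1,0)\cup(h,1)$; the resulting straight lines are distinct because they already differ on $[0,h]$.
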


This paper is organized as follows. In Section \ref{S:prel}, we introduce some basic notion in  the \T space theory. A variation formula for the asymptotic \T metric is obtained  in Section \ref{S:variation}.  Theorems \ref{Th:geodesic} and \ref{Th:line} are proved in Section \ref{S:geodesic} and Section \ref{S:line} separately. We investigate the relationship on
the substantial boundary points for points along a geodesic in Section \ref{S:property}. In Section \ref{S:example},  an example is given to show the nonuniqueness of geodesics joining certain substantial point with the basepoint.
Some parallel results  in the infinitesimal setting are obtained in the last section.

The method used here can also be used to deal with some more general cases. However, there are some difficulties in solving the problem in all cases.

\begin{centering}\section{\!\!\!\!\!{. }Some Preliminaries}\label{S:prel}\end{centering}

\subsection{\T space and asymptotic \T space}
Let $S$ be a Riemann surface of topological type.  The \T space
$\ts$ is the space of equivalence classes of \qc maps $f$ from $S$
to a variable Riemann surface $f(S)$. Two \qc maps $f$ from $S$ to
$f(S)$ and $g$ from $S$ to $g(S)$ are equivalent if there is a
conformal map $c$ from $f(S)$ onto $g(S)$ and a homotopy through \qc
maps $h_t$ mapping $S$ onto $g(S)$ such that $h_0=c\circ f$, $h_1=g$
and $h_t(p)=c\circ f(p)=g(p)$ for every $t\in [0,1]$ and every $p$
in the ideal boundary of $S$. Denote by  $[f]$  the \T equivalence
class of $f$; also sometimes denote the equivalence class by $\emu$
where $\mu$ is the Beltrami differential of $f$.

The asymptotic \T space is the space of a larger equivalence classes. The definition of the new equivalence classes
 is exactly
the same as the previous definition with one exception; the word
\emph{conformal } is replaced by \emph{asymptotically conformal}. A
\qc map $f$ is asymptotically conformal if for every $\epsilon>0$,
there is a compact subset $E$ of $S$, such that the dilatation of
$f$ outside of $E$ is less than $1+\epsilon$. Accordingly, denote by
$[[f]]$ or $\mut$ the asymptotic equivalence class of $f$.

Denote by $Bel(S)$ the Banach space of Beltrami differentials
$\mu=\mu(z){d\bar z}/dz$ on $S$ with finite $L^{\infty}$-norm and by
$M(S)$ the open unit ball in $Bel(S)$.

For $\mu\in M(S)$, define
\begin{equation*}
k_0(\emu)=\inf\{\|\nu\|_\infty:\,\nu\in\emu\}.
\end{equation*}

Define $h^*(\mu)$ to be the infimum over all
 compact subsets $E$ contained in $S$ of the essential supremum
 norm of the Beltrami differential $\mu(z)$ as $z$ varies over
 $S\backslash E$ and  $h(\emu)$ to be the infimum of
 $h^*(\nu)$ taken over all representatives $\nu$ of the class
 $\emu$.  It is obvious that $h(\emu)\leq
 k_0(\emu)$. Following \cite{ELi}, $\emu$ is called
 a Strebel point if $h(\emu)<k_0(\tau)$; otherwise, $\tau$ is called
 a non-Strebel point.

Put
\begin{equation*}
h(\mut)=\inf\{h^*(\nu):\,\nu\in\mut\}.
\end{equation*}
 We say that $\mu$ is extremal  in $\emu$
  if $\nmu=k_0(\emu)$ and $\mu$ is asymptotically extremal if $h^*(\mu)=h(\mut)$. The relation $h(\emu)=h(\mut)$
   is due to the definition.

The \T metric $d_{T}$ between two points $\tau, \sigma \in \ts$ is
defined as follows:
\begin{equation*}
d_{T}(\tau, \sigma)=\frac{1}{2}\inf_{\mu\in\tau,\ \nu\in
\sigma}\log\frac{1+\|(\mu-\nu)/(1-\bar\nu\mu)\|_\infty}{1-\|(\mu-\nu)/(1-\bar\nu\mu)\|_\infty}.
\end{equation*}
The asymptotic \T metric $d_{AT}$ between two points $\wt\tau,
\wt\sigma \in \ats$ is defined by
\begin{equation*}
d_{AT}(\wt\tau,\wt \sigma)=\frac{1}{2}\inf_{\mu\in\wt\tau,\ \nu\in
\wt\sigma}\log\frac{1+\|(\mu-\nu)/(1-\bar\nu\mu)\|_\infty}{1-\|(\mu-\nu)/(1-\bar\nu\mu)\|_\infty}.
\end{equation*}
In particular, the  distance between $\mut$ and the basepoint $\oo$ is
\begin{equation*}
d_{AT}(\mut, \oo)=\frac{1}{2}\log H(\mut),\;\text {where\; }H(\mut)=\frac{1+h(\mut)}{1-h(\mut)}.
\end{equation*}

\subsection{Tangent spaces to \T space and asymptotic \T space}

The cotangent space to $\ts$ at the basepoint is the Banach space
$Q(S)$ of integrable holomorphic quadratic differentials $\vp$ on $S$ with
$L^1-$norm
\begin{equation*}
\|\vp\|=\iint_{S}|\vp(z)|\, dxdy<\infty.
 \end{equation*}    In what follows,  let $\q1s$
denote the unit sphere of $Q(S)$. Moreover, let $\qds$ denote the
set of all degenerating sequence $\{\vp_n\}\subset \q1s$. By
definition, a sequence $\{\vp_n\}$ is called degenerating if it
converges to 0 uniformly on compact subset of $S$.

Two Beltrami differentials $\mu$ and $\nu$ in $Bel(S)$ are said to
be infinitesimally equivalent if
\begin{equation*}\iint_S(\mu-\nu)\vp \, dxdy=0,  \text{
for any } \vp\in Q(S).
\end{equation*}
The tangent space $\zs$ of $\ts$ at the basepoint is defined as the
set of the quotient space of $Bel(S)$ under the equivalence
relations.  Denote by $\emb$ the equivalence class of $\mu$ in
$\zs$.

$\zs$ is a Banach space and actually \cite{GL}  its standard
sup-norm satisfies
\begin{equation*}
\|\emb\|:=\sup_{\vp\in \q1s} \left|\iint_S \mu\vp \,
dxdy \right|=\inf\{\|\nu\|_\infty:\,\nu\in\emb\}.\end{equation*}

Two Beltrami differentials $\mu$ and $\nu$ in $Bel(S)$ are said to
be infinitesimally asymptotically equivalent if
\begin{equation*}\sup_{\qds}\limsup_{n\to\infty}\left|\iint_S(\mu-\nu)\vp_n \, dxdy\right|=0,
\end{equation*}
where the first  $supremum$ is taken when  $\{\vp_n\}$ varies over
$\qds$.

 The tangent space $\azs$ of $\ats$ at the basepoint is
defined as the set of the quotient space of $Bel(S)$ under the asymptotic
equivalence relation.  Denote by $\azmu$ the equivalence class of
$\mu$ in $\azs$.

Define  $b(\bmu)$ to be  the infimum over all elements in the
equivalence class $\bmu$  of the quantity $b^*(\nu)$. Here
$b^*(\nu)$ is the infimum over all compact subsets $E$ contained in
$S$ of the essential supremum of the the Beltrami differential $\nu$
as $z$ varies over $S-E$. It is obvious that $b^*(\mu)\leq\|\bmu\|
$. $\bmu$ is called
 an infinitesimal Strebel point if $b(\bmu) <\|\emb\|$.

Put
\begin{equation*}
b(\azmu)=\inf\{b^*(\nu):\,\nu\in\azmu\}.
\end{equation*}

 We say that $\mu$ is (infinitesimally) extremal
  if $\nmu=\|\emb\|$ and $\mu$ is (infinitesimally) asymptotically extremal if $b^*(\mu)=b(\azmu)$.
 We also have $b(\emb)=b(\azmu)$ \cite{GL}.

$\azs$ is a Banach space and   its  standard infinitesimal
asymptotic norm satisfies (see \cite{GL})
\begin{equation*}
\|\azmu\|:=\sup_{\qds}\limsup_{n\to\infty} \left|\iint_S \mu\vp_n \,
dxdy\right|=\inf\{\|\nu\|_\infty:\,\nu\in\azmu\}=b(\azmu).\end{equation*}

\subsection{Substantial boundary points and Hamilton sequence}

Now we define the notion of boundary dilatation of a \qc mapping at
a boundary point. For a Riemann surface, the meaning of what is a
boundary point can be problematic. However, if  $S$ can be embedded into a larger surface $\wt S$ such that the closure of
$S$ in $\wt S$ is compact, then it is possible to define the
boundary dilatation. In this section, we assume that $S$ is such a surface.

Let $p$ be a point on $\pa S$ and let $\mu\in Bel(S)$. Define
 \begin{equation*}
 h^*_p(\mu)=\inf\{\esssup_{z\in U\bigcap S}|\mu(z)|:\;U \text{ is an open
 neighborhood
 in } \wt{S} \text{ containing }p\}
 \end{equation*}
 to be the boundary dilatations of $\mu$ at  $p$.
If $\mu\in M(S)$, define
 \begin{equation*}
 h_p([\mu])=\inf\{h^*_p(\nu):\;\nu\in[\mu]\}
 \end{equation*}
to be the boundary dilatations $[\mu]$ at  $p$. For a general $\mu\in
Bel(S)$, define
 \begin{equation*}
 b_p([\mu]_Z)=\inf\{h^*_p(\nu):\;\nu\in \emb\}
 \end{equation*}
to be the boundary dilatations of $\emb$ at  $p$. If we define the
quantities
 \begin{equation*}
 h_p(\mut)=\inf\{h^*_p(\nu):\;\nu\in \mut\},\quad b_p(\azmu)=\inf\{h^*_p(\nu):\;\nu\in
 \azmu\},
 \end{equation*}
then  $h_p(\emu)=h_p(\mut)$ and $b_p(\emb)=b_p(\azmu)$. In particular,  Lakic \cite{La} proved that when $S$ is a plane domain,
\begin{equation*}
h(\mut)=\max_{p\in \pa S} h_p(\mut),\quad b(\azmu)=\max_{p\in \pa S}
 b_p(\azmu).
 \end{equation*}

 As is well
known, $\mu$ is extremal if and only if
  it  has a  so-called Hamilton sequence, namely, a sequence
$\{\psi_n\}\subset \q1s$, such that
\begin{equation}
\lim_{n\to\infty}\iint_S \mu\psi_n(z)dxdy=\nmu.
\end{equation}
Similarly, by Theorem 8 on page 281 in \cite{GL}, $\mu$ is asymptotically extremal if and only if
  it  has an asymptotic Hamilton sequence, namely, a degenerating  sequence
$\{\psi_n\}\subset \q1s$, such that
\begin{equation}
\lim_{n\to\infty}\iint_S \mu\psi_n(z)dxdy=h^*(\mu).
\end{equation}

Now, we assume that $S$ is a plane domain with two or more boundary points. Then, the following lemma derives from  Theorem 6 on page 333 in ~\cite{GL}:
 \begin{lemma}\label{Th:atslem1}
The following three conditions are equivalent for every boundary point
$p$ of $S$ and every asymptotic  or infinitesimal asymptotic extremal representative $\mu$:
\\(1) $h(\emu)=h_p(\emu)$ (equivalently, $h(\mut)=h_p(\mut)$),\\
(2) $b(\emu)=b_p(\emu)$   (equivalently,  $b(\azmu)=b_p(\azmu)$),\\
(3) there exists an asymptotic Hamilton sequence for $\mu $ degenerating towards
$p$, i.e., a sequence
$\{\psi_n\}\subset\q1s$   converging  uniformly to 0 on compact subsets of $S\backslash\{p\}$, such that
\begin{equation}
\lim_{n\to\infty}\iint_S \mu\psi_n(z)dxdy=h_p^*(\mu).
\end{equation}
 \end{lemma}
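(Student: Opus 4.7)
The plan is to establish the equivalences by the cycle $(3)\Rightarrow(2)\Leftrightarrow(1)\Rightarrow(3)$, with the last implication being the substantive one and obtained by appealing to Theorem~6 on page~333 of \cite{GL}. The parenthetical equivalences in (1) and (2)---namely $h(\emu)=h(\mut)$ and $b(\emb)=b(\azmu)$, together with their boundary analogues $h_p(\emu)=h_p(\mut)$ and $b_p(\emb)=b_p(\azmu)$---are immediate from the definitions, since each asymptotic (respectively infinitesimal asymptotic) equivalence class is a union of the finer Teichm\"uller (respectively infinitesimal) classes, so the infima defining the dilatations agree. The equivalence $(1)\Leftrightarrow(2)$ then reduces to the parallel identities $h_p^*(\mu)=b_p^*(\mu)$ and $h^*(\mu)=b^*(\mu)$ that hold for $\mu\in M(S)$ when it is regarded simultaneously as an element of $Bel(S)$.

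For $(3)\Rightarrow(2)$, I take the sequence $\{\psi_n\}\subset\q1s$ provided by (3). Concentration at $p$ implies that for every open neighborhood $U$ of $p$ in $\wt S$ one has $\iint_{S\setminus U}|\psi_n|\,dxdy\to 0$. Decomposing $\iint_S\mu\psi_n\,dxdy=\iint_{U\cap S}\mu\psi_n\,dxdy+\iint_{S\setminus U}\mu\psi_n\,dxdy$, the second piece vanishes in the limit while the first has modulus bounded by $\esssup_{U\cap S}|\mu|$; passing to the limit and taking the infimum over $U$ yields $\lim_n\iint_S\mu\psi_n\,dxdy\le h_p^*(\mu)$. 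Since $\{\psi_n\}$ is simultaneously an asymptotic Hamilton sequence (so $\lim=h^*(\mu)$ by the extremality hypothesis), the two values coincide: $h^*(\mu)=h_p^*(\mu)$. For any $\nu\in\emb$ infinitesimally equivalent to $\mu$, the cancellation $\iint_S(\mu-\nu)\psi_n\,dxdy=0$ combined with the same decomposition applied to $\nu$ yields $h_p^*(\nu)\ge h_p^*(\mu)$. Taking the infimum over $\nu\in\emb$, one obtains the chain $b_p(\emb)=b_p(\azmu)=h_p^*(\mu)=h^*(\mu)=b^*(\mu)=b(\azmu)$, which is (2).

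For $(1)\Rightarrow(3)$, the substantive direction: the hypothesis $h(\mut)=h_p(\mut)$ says the asymptotic extremal problem for $\mu$ is governed entirely by the single boundary point $p$. Theorem~6 on page~333 of \cite{GL} supplies the construction of the required Hamilton sequence concentrating at $p$ via a truncation-and-diagonalization procedure: one modifies $\mu$ on a shrinking neighborhood basis $\{U_n\}$ of $p$ in $\wt S$, applies the classical Hamilton criterion to each resulting auxiliary extremal problem, and diagonalizes to obtain a single degenerating sequence in $\q1s$ whose mass concentrates near $p$ and whose pairing with $\mu$ realizes $h_p^*(\mu)$ in the limit. The main obstacle---and the reason this direction is delicate---is ensuring that the quadratic differentials $\psi_n$ do not accumulate mass at any boundary point other than $p$; this is controlled by Lakic's identity $h(\mut)=\max_{p'\in\pa S}h_{p'}(\mut)$ for plane domains cited just above, which under hypothesis (1) forces the entire asymptotic dilatation to be attained at $p$. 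I invoke the Gardiner--Lakic theorem for this step rather than reproduce the construction.
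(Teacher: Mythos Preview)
The paper does not give a proof of this lemma; it simply states that the result ``derives from Theorem~6 on page~333 in~\cite{GL}'' and moves on. Your proposal is entirely consistent with this: you invoke the same Gardiner--Lakic theorem for the substantive implication $(1)\Rightarrow(3)$, and the additional detail you supply for $(3)\Rightarrow(2)$ and $(1)\Leftrightarrow(2)$ is a correct elaboration that the paper itself omits. One small point worth making explicit in your write-up: in $(3)\Rightarrow(2)$ you assert that $\{\psi_n\}$ is ``simultaneously an asymptotic Hamilton sequence (so $\lim=h^*(\mu)$)''---this is justified because condition~(3) as stated calls the sequence an \emph{asymptotic Hamilton sequence}, which by the paper's earlier definition already means $\lim\iint_S\mu\psi_n=h^*(\mu)$; the displayed equation $\lim=h_p^*(\mu)$ then forces $h^*(\mu)=h_p^*(\mu)$, and your chain of equalities follows.
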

 If one  of three conditions in the  lemma holds at some $p\in
\pa S$, we call $p$ is a substantial boundary point for $\mut$ (or
$\emu$) and $\azmu$ (or $\emb$), respectively.

\begin{centering}\section{\!\!\!\!\!{. }
A variation formula for asymptotic \T metric}\label{S:variation}\end{centering}

In this section, suppose that  $S$ is  a hyperbolic Riemann surface such that $\ats$ is not trivial.
Let $\Phi=\Phi_S:\; M(S)\to \ats$ denote the canonical projection from $M(S)$ to $\ats$ so that $\Phi(\mu)$ is the asymptotic equivalence class $\mut$ for $\mu\in M(S)$.
As is well known, the \T distance on $\ts$ is induced by a Finsler structure (see \cite{Byr, Gar,  Roy}). Proposition 5 in \cite{EGL} shows that  a quotient Finsler structure on $\ats$ is induced by the \T Finsler structure on $M(S)$ and the map $\Phi$,  and hence
the asymptotic \T distance on $\ats$ is induced by the quotient Finsler structure. For $\mu\in M(S)$ and $\nu\in Bel(S)$, the quotient Finsler structure $F=F_S$ is
\begin{align}
F(\Phi(\mu),\Phi'(\mu)\nu)=\inf\left\{\left\|\frac{\sigma}{1-\|\mu\|^2}\right\|_\infty:\; \sigma\in Bel(S) \text{ and } \Phi'(\mu)\sigma=\Phi'(\mu)\nu\right\}.
\end{align}
In particular, $F$ is positive definite and locally Lipschitz continuous.

For any $\mu\in M(S)$, set

\begin{align*}
& I(\mu)=I_S(\mu)=\sup_{\qds}\limsup_{n\to\infty}\left |Re\iint_S \frac{\mu\vp}{1-|\mu|^2} \,
dxdy \right|,\\
& J(\mu)=J_S(\mu)=\sup_{\qds}\limsup_{n\to\infty} \left|
\iint_S \mu\vp \,
dxdy \right|,\\
& \de(\mu)=J_S(\mu)=\sup_{\qds}\limsup_{n\to\infty} \left|\iint_S \frac{|\mu|^2|\vp|}{1-|\mu|^2} \,
dxdy \right|.
\end{align*}
 The Reich-Strebel inequalities (see \cite{RS3} or Theorem 9 of Chapter 4 in \cite{GL}), so-called fundamental inequalities,  play a very important role in study of the theory of \T spaces. The asymptotic analogues of the Reich-Strebel inequalities are obtained in \cite{EGL}.

\noindent\textbf{The Asymptotic  Fundamental Inequalities.}
Suppose $f$ is a \qc mapping
from $S$ to $S^\mu=f(S)$ with $\mu$ its Beltrami differential. Let $H=H(\mut)$.
Then

\begin{flushleft}\begin{equation*}\label{Eq:asymineqI}
(I) \quad \frac{1}{H}\leq\inf_{\qds}
\liminf_{n\to\infty}\iint_S\frac{\left|1-\mu\frac{\phi_n}{|\phi_n|}\right|^2}{1-|\mu|^2}|\phi_n |\, dxdy,
\end{equation*}
\end{flushleft}
 and
\begin{flushleft}\begin{equation*}\label{Eq:asymineqII}
(II) \quad H\leq
\sup_{\qds}\left\{\liminf_{n\to\infty}\iint_S\frac{\left|1-\mu\frac{\phi_n}{|\phi_n|}\right|^2}{1-|\mu|^2}|\phi_n |\ dxdy\right\}.
\end{equation*}
\end{flushleft}
A simple argument shows that the inequality (I) implies

\begin{equation}\label{Eq:ijh1}
I(\mu)\leq \frac{h(\mut)}{1+h(\mut)}+\de(\mu),
\end{equation}
and  the inequality (II) does

\begin{equation}\label{Eq:ijh2}
\frac{h(\mut)}{1-h(\mut)}-\de(\mu)\leq I(\mu).
\end{equation}
Combining (\ref{Eq:ijh1}) and (\ref{Eq:ijh2}), it can be deduced (see Theorem 11 of Chapter 14 in \cite{GL}) that
\begin{equation}
\begin{split}
&F(\Phi(\mu),\Phi'(\mu)\nu)=J\left(\left(\frac{\nu}{1-|\mu|^2}\frac{\pa f}{\ov{\pa f}}\right)\circ f^{-1}\right)\\
&=\sup_{Q^1_d(S^\mu)}\limsup_{n\to\infty}\left|\iint_{S^\mu}\left[\left(\frac{\nu}{1-|\mu|^2}\frac{\pa f}{\ov{\pa f}}\right)\circ f^{-1}\right]\psi_n\,dudv\right|.
\end{split}
\end{equation}

Let $\mu(t)$ be a continuous curve from $[0,t_0]$ into $M(S)$. We say $\mu(t)$ is differentiable at 0 if there exists some $\mu\in Bel(S)$ such
that $\mu(t)=\mu(0)+t\mu+o(t)$ as $t\to 0_+$, or precisely,
\begin{equation*}
\lim_{t\to 0_+}\left\| \frac{\mu(t)-\mu(0)}{t}-\mu\right\|_\infty=0.
\end{equation*}
$\mu$ is called the derivative of $\mu(t)$ at 0 and is denoted by $\mu'(0)$.

The following theorem provides a variation formula for the asymptotic \T metric.

\begin{theorem}\label{Th:binaryasym}
Let $\mu(t)$ and $\nu(t)$ be two continuous curves from $[0,t_0]$ into $M(S)$ which are differentiable at 0 and satisfy $\mu(0)=\nu(0)$. Then,
\begin{equation}\label{Eq:variation}
\dat(\Phi(\mu(t)),\Phi(\nu(t)))=tF(\Phi(\mu(0)), \Phi'(\mu(0))(\mu'(0)-\nu'(0)))+o(t),\; t\to 0_+.
\end{equation}
\end{theorem}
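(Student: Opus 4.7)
The plan is to transfer the problem to a single Beltrami differential on the moving base surface $S^{\mu(t)}$, Taylor-expand in $t$, and then invoke the asymptotic fundamental inequalities~(\ref{Eq:ijh1})--(\ref{Eq:ijh2}) to pass from the boundary dilatation $h$ to the asymptotic norm $J$ and finally to the Finsler datum $F$. I would first pick \qc representatives $f_t,g_t$ of $\mu(t),\nu(t)$ depending continuously on $t$ with $f_0=g_0$ (possible because $\mu(0)=\nu(0)$), and let
\[
\lambda(t)=\left(\frac{\nu(t)-\mu(t)}{1-\overline{\mu(t)}\nu(t)}\cdot\frac{\pa f_t}{\ov{\pa f_t}}\right)\circ f_t^{-1}
\]
be the Beltrami differential on $S^{\mu(t)}$ of $g_t\circ f_t^{-1}$. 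Translation invariance of the asymptotic \T metric under right composition with $f_t^{-1}$ gives
\[
\dat(\Phi(\mu(t)),\Phi(\nu(t)))=\tfrac{1}{2}\log H([[\lambda(t)]]_{AT(S^{\mu(t)})}).
\]

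Since $\mu(0)=\nu(0)$, differentiability of $\mu(t)$ and $\nu(t)$ at $0$ combined with the above formula yields
\[
\lambda(t)=t\eta_t+o(t)\text{ in }L^\infty(S^{\mu(t)}),\quad \eta_t=\left(\frac{\nu'(0)-\mu'(0)}{1-|\mu(0)|^2}\cdot\frac{\pa f_t}{\ov{\pa f_t}}\right)\circ f_t^{-1},
\]
so in particular $\|\lambda(t)\|_\infty=O(t)$ and $\de(\lambda(t))=O(t^2)$. A Taylor expansion of $\tfrac{1}{2}\log\frac{1+h}{1-h}$ about $h=0$ gives $\dat(\Phi(\mu(t)),\Phi(\nu(t)))=h([[\lambda(t)]])+O(t^3)$, and combining~(\ref{Eq:ijh1}) with~(\ref{Eq:ijh2}) applied to $\lambda(t)$ on $S^{\mu(t)}$ yields $h([[\lambda(t)]])=I(\lambda(t))+O(t^2)$. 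Because $\lambda(t)/(1-|\lambda(t)|^2)=\lambda(t)+O(t^3)$ and multiplying each $\vp_n$ in a degenerating sequence by a unit-modulus constant preserves degeneration, the standard phase-rotation trick promotes $|\mathrm{Re}|$ to $|\cdot|$ in the definition of $I$; hence $I(\lambda(t))=J(\lambda(t))+O(t^3)=tJ(\eta_t)+o(t)$. The Finsler identity displayed immediately above the theorem then identifies $J(\eta_0)$ with $F(\Phi(\mu(0)),\Phi'(\mu(0))(\nu'(0)-\mu'(0)))$, and absolute homogeneity of $F$ in the tangent argument delivers the right-hand side of~(\ref{Eq:variation}).

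The one nontrivial step is the continuity $J(\eta_t)\to J(\eta_0)$ as $t\to 0^+$, since $\eta_t$ and the degenerating sequences in $Q^1_d(S^{\mu(t)})$ used to evaluate $J(\eta_t)$ both live on the varying Riemann surface $S^{\mu(t)}$ rather than on a fixed base. I would handle this by pulling everything back to $S^{\mu(0)}$ through $f_t\circ f_0^{-1}$ and invoking the local Lipschitz continuity of the quotient Finsler structure $F$ on the tangent bundle of $\ats$ (Proposition~5 of~\cite{EGL}) to bound the resulting discrepancy uniformly in the choice of degenerating sequence. This transfer between moving base surfaces is, in my view, the main technical obstacle in the proof.
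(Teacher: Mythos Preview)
Your proposal is correct and follows essentially the same route as the paper: transfer to the moving base surface via composition, expand the asymptotic distance using (\ref{Eq:ijh1})--(\ref{Eq:ijh2}) to replace $h$ by $J$ up to $O(t^2)$, and then invoke the (Lipschitz) continuity of the quotient Finsler structure $F$ to pass from $J_{S_t}(\eta_t)$ to $J_{S_0}(\eta_0)=F(\Phi(\mu(0)),\Phi'(\mu(0))(\mu'(0)-\nu'(0)))$. The only cosmetic differences are that the paper packages the Taylor-expansion step as a separate lemma (Lemma~\ref{Th:ijhde}) and composes in the order $f^{\mu(t)}\circ(f^{\nu(t)})^{-1}$ rather than your $g_t\circ f_t^{-1}$, and you are somewhat more explicit than the paper about the phase-rotation argument converting $I$ to $J$.
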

To prove the theorem, we need a lemma.
\begin{lemma}\label{Th:ijhde}
Suppose $S_t$ is a Riemann surface which may depend on $t\in [0,t_0]$. If $\zeta(t)\in M(S_t)$ satisfies $\zeta(t)=t\eta(t)+o(t)$ as $t\to 0_+$, where $\eta(t)\in Bel(S_t)$ satisfies $\eta(t)=O(1)$ as $t\to 0_+$, then it holds that
\begin{equation}\label{Eq:var1}
\dat(\Phi_{S_t}(0),\Phi_{S_t}(\zeta(t)))=tJ_{S_t}(\eta(t))+o(t),\; t\to 0_+.
\end{equation}
\end{lemma}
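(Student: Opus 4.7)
The plan is to reduce the statement to an estimate of $h(\wt{\zeta(t)})$ and then invoke the Asymptotic Fundamental Inequalities. By the definition of $\dat$ given in Section \ref{S:prel},
\begin{equation*}
\dat(\Phi_{S_t}(0),\Phi_{S_t}(\zeta(t)))=\tfrac{1}{2}\log\frac{1+h(\wt{\zeta(t)})}{1-h(\wt{\zeta(t)})}.
\end{equation*}
Since $\|\zeta(t)\|_\infty=O(t)$, the quantity $h(\wt{\zeta(t)})$ tends to $0$, so the Taylor expansion $\frac{1}{2}\log\frac{1+h}{1-h}=h+O(h^3)$ shows it suffices to prove
\begin{equation*}
h(\wt{\zeta(t)})=tJ_{S_t}(\eta(t))+o(t),\quad t\to 0_+.
\end{equation*}

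First I would apply the inequalities (\ref{Eq:ijh1}) and (\ref{Eq:ijh2}) on each surface $S_t$ to $\mu=\zeta(t)$, yielding the two-sided bound
\begin{equation*}
I_{S_t}(\zeta(t))-\de_{S_t}(\zeta(t))\leq \frac{h(\wt{\zeta(t)})}{1-h(\wt{\zeta(t)})},\qquad \frac{h(\wt{\zeta(t)})}{1+h(\wt{\zeta(t)})}\geq I_{S_t}(\zeta(t))-\de_{S_t}(\zeta(t)).
\end{equation*}
(Here the second bound is just a rewriting of (\ref{Eq:ijh1}).) Since $|\zeta(t)|^2\leq \|\zeta(t)\|_\infty^2=O(t^2)$, one has $\de_{S_t}(\zeta(t))=O(t^2)$ uniformly in $t$. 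Hence modulo an $O(t^2)$ error both bounds pinch $h(\wt{\zeta(t)})$ against $I_{S_t}(\zeta(t))$.

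Next I would show $I_{S_t}(\zeta(t))=tJ_{S_t}(\eta(t))+o(t)$. Writing $1/(1-|\zeta|^2)=1+O(t^2)$ separates off an $O(t^3)$ piece, reducing the question to $\sup_{\qdde}\limsup_n|Re\iint \zeta(t)\vp_n|$. Since $\qds$ is invariant under $\vp_n\mapsto e^{i\theta}\vp_n$, taking $\theta$ to remove the real part shows this supremum equals $J_{S_t}(\zeta(t))$. Finally, $J$ is subadditive (its defining $\limsup$ is subadditive and the sup preserves this), satisfies $J(c\,\cdot)=|c|J(\cdot)$, and is controlled by the $L^\infty$ norm. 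From $\zeta(t)-t\eta(t)=o(t)$ in $\|\cdot\|_\infty$, the reverse triangle inequality gives
\begin{equation*}
|J_{S_t}(\zeta(t))-tJ_{S_t}(\eta(t))|\leq J_{S_t}(\zeta(t)-t\eta(t))\leq \|\zeta(t)-t\eta(t)\|_\infty=o(t).
\end{equation*}
Combining these three steps yields $h(\wt{\zeta(t)})=tJ_{S_t}(\eta(t))+o(t)$, and the Taylor expansion above produces (\ref{Eq:var1}).

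The delicate point is ensuring all the error terms remain truly $o(t)$ uniformly in $t$, since the surface $S_t$ and the Beltrami class $\eta(t)$ both depend on $t$. This is fine provided $\|\eta(t)\|_\infty=O(1)$ (giving $J_{S_t}(\eta(t))=O(1)$, so the linear term and the quadratic error are cleanly separated) and provided one uses only the intrinsic, $S_t$-independent algebraic identities for $J_{S_t}$, $I_{S_t}$, $\de_{S_t}$. The inequalities (I) and (II) depend only on $\mu$ and the ambient surface, so no uniformity hypothesis on the family $\{S_t\}$ itself is required.
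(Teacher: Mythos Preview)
Your approach is essentially identical to the paper's: reduce $\dat$ to $h([[\zeta(t)]])$ via the logarithm expansion, apply inequalities (\ref{Eq:ijh1}) and (\ref{Eq:ijh2}) on $S_t$ with $\mu=\zeta(t)$, and note that $\de_{S_t}(\zeta(t))$, the denominator correction in $I$, and the gap between $h/(1\pm h)$ and $h$ are all $O(t^2)$. You are more explicit than the paper about two points it leaves implicit---the rotation $\vp_n\mapsto e^{i\theta_n}\vp_n$ that removes the real part, and the reverse triangle inequality for $J$ that converts $J_{S_t}(\zeta(t))$ into $tJ_{S_t}(\eta(t))$---but the skeleton is the same.

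One slip to fix: your displayed ``two-sided bound'' has both inequalities pointing the same way (each says $I-\de$ is below something), so as written they do not pinch $h$. The upper bound coming from (\ref{Eq:ijh2}) should read $\dfrac{h([[\zeta(t)]])}{1-h([[\zeta(t)]])}\le I_{S_t}(\zeta(t))+\de_{S_t}(\zeta(t))$; with this correction the sandwich argument you describe in the following sentence goes through. (Also, the supremum there should be over $Q^1_d(S_t)$, not $\qdde$.)
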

\begin{proof}
By the definition of the asymptotic \T  distance, we have
\begin{equation*}
\dat(\Phi_{S_t}(0),\Phi_{S_t}(\zeta(t)))=\frac{1}{2}\log\frac{1+h([[\zeta(t)]])}{1-h([[\zeta(t)]])}=(1+o(1))h([[\zeta(t)]]),\;  t\to 0_+.
\end{equation*}
Replace $\mu$ by $\zeta(t)$ and apply the inequalities (\ref{Eq:ijh1}) and (\ref{Eq:ijh2}) on the Riemann surface $S_t$. It is clear that, as $ t\to 0_+$,
\begin{equation*}
I_{S_t}(\zeta(t))=J_{S_t}(\zeta(t)+O(t^2).
\end{equation*}
Notice that both $\frac{h([[\zeta(t)]])}{1-h([[\zeta(t)]])}$ and $\frac{h([[\zeta(t)]])}{1+h([[\zeta(t)]])}$ differ from $h([[\zeta(t)]])$ by a term of order $t^2$ and $\de_{S_t}(\zeta(t))$ is a term of order $t^2$. Therefore, we have
\begin{equation*}
h([[\zeta(t)]])=J_{S_t}(\zeta(t))+o(t), \;\text{ as } t\to 0_+,
\end{equation*}
and (\ref{Eq:var1}) follows.
\end{proof}

\textbf{Proof of Theorem \ref{Th:binaryasym}.} Let $f^{\mu(t)}:\; S\to S^{\mu(t)}$ and  $f^{\nu(t)}:\; S\to S^{\nu(t)}$ be the \qc mappings with the Beltrami differentials $\mu(t)$ and $\nu(t)$ respectively. Denote by $\zeta(t)$ the Beltrami differential of $f^{\mu(t)}\circ (f^{\nu(t)})^{-1}$, that is,
\begin{align*}
\zeta(t)=\left(\frac{\mu(t)-\nu(t)}{1-\ov{\nu(t)}\mu(t)}\frac{\pa f^{\nu(t)}}{\ov{\pa f^{\nu(t)}}}\right)\circ (f^{\nu(t)})^{-1}.
\end{align*}
Since $\mu(t)$ and $\nu(t)$ is differentiable at 0, we have
\begin{align*}
\frac{\mu(t)-\nu(t)}{1-\ov{\nu(t)}\mu(t)}=\frac{t(\mu'(0)-\nu'(0))}{1-|\nu(t)|^2}+o(t),\; t\to 0_+.
\end{align*}
Put $S_t=S^{\nu(t)}$. We find that $\zeta(t)\in M(S_t)$ satisfies the assumption of Lemma \ref{Th:ijhde} with
\begin{align*}
\eta(t)=\left(\frac{\mu'(0)-\nu'(0)}{1-|\nu(t)|^2}\frac{\pa f^{\nu(t)}}{\ov{\pa f^{\nu(t)}}}\right)\circ (f^{\nu(t)})^{-1}.
\end{align*}
By Lemma \ref{Th:ijhde},
\begin{equation*}
\dat(\Phi_{S_t}(0),\Phi_{S_t}(\zeta(t)))=tJ_{S_t}(\eta(t))+o(t),\; t\to 0_+.
\end{equation*}
Due to the continuity of the Finsler structure $F$, as $t\to 0_+$ it derives that
\begin{align*}
 J_{S_t}(\eta(t))&=F(\Phi(\nu(t)), \Phi'(\nu(t))(\mu'(0)-\nu'(0)))\\
 &\to  F(\Phi(\nu(0)), \Phi'(\nu(0))(\mu'(0)-\nu'(0)))\\
 &=  F(\Phi(\mu(0)), \Phi'(\mu(0))(\mu'(0)-\nu'(0))).
\end{align*}
Thus,
\begin{equation*}
\dat(\Phi_{S_t}(0),\Phi_{S_t}(\zeta(t)))=tF(\Phi(\mu(0)), \Phi'(\mu(0))(\mu'(0)-\nu'(0)))+o(t),\; t\to 0_+.
\end{equation*}
Note that the map
 \begin{align*}\mathscr{A}:\ats &\to AT(S_t),\\
 [[f]] &\to [[f\circ (f^{\nu(t)})^{-1}]]
 \end{align*}
is   distance-preserving. We have
$\dat(\Phi(\mu(t)),\Phi(\nu(t)))=\dat(\Phi_{S_t}(0),\Phi_{S_t}(\zeta(t)))$.
Now (\ref{Eq:variation}) follows immediately. The concludes the proof of Theorem \ref{Th:binaryasym}.

The following corollary is an immediate consequence of Theorem \ref{Th:binaryasym}.
\begin{cor}\label{Th:corvar}
For any two Beltrami differentials $\mu$ and $\nu$ in $Bel(S)$, it holds that
\begin{equation}\label{Eq:binaryasym}
\dat([[t\mu]],[[t\nu]])=t\sup_{\qds}\limsup_{n\to\infty} \left|\iint_S(\mu-\nu)\phi_n \, dxdy\right|+o(t),\; t\to 0_+,
\end{equation}
or equivalently,
\begin{equation}\label{Eq:binary1}\lim_{t\to 0^+}\frac{d_{AT}([[t\mu]], [[t\nu]])}{t}
=\sup_{\qds}\limsup_{n\to\infty} \left|\iint_S(\mu-\nu)\phi_n \, dxdy\right|.
\end{equation}
\end{cor}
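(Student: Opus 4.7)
The plan is to derive this corollary as a direct specialization of Theorem~\ref{Th:binaryasym}. Given arbitrary $\mu,\nu\in Bel(S)$, choose $t_0>0$ small enough that $t\mu,\ t\nu\in M(S)$ for every $t\in[0,t_0]$ (possible since $M(S)$ is the open unit ball of $Bel(S)$). Consider the linear curves $\mu(t)=t\mu$ and $\nu(t)=t\nu$. Both are continuous into $M(S)$, both vanish at $t=0$ (so $\mu(0)=\nu(0)$), and in the sense defined just before Theorem~\ref{Th:binaryasym} they are differentiable at $0$ with $\mu'(0)=\mu$ and $\nu'(0)=\nu$. Applying Theorem~\ref{Th:binaryasym} to this pair yields
$$
\dat(\Phi(t\mu),\Phi(t\nu))=tF(\Phi(0),\Phi'(0)(\mu-\nu))+o(t),\quad t\to 0_+,
$$
so the task reduces to computing the Finsler density $F$ at the origin.

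For that step I would appeal to the explicit expression for $F$ recorded in Section~\ref{S:variation}, namely
$$
F(\Phi(\alpha),\Phi'(\alpha)\sigma)=J\left(\left(\frac{\sigma}{1-|\alpha|^2}\frac{\pa f^\alpha}{\ov{\pa f^\alpha}}\right)\circ (f^\alpha)^{-1}\right).
$$
Specializing to $\alpha=0$, the normalized solution $f^\alpha$ is the identity map, so $\pa f^\alpha/\ov{\pa f^\alpha}\equiv 1$ and $1-|\alpha|^2\equiv 1$. Hence $F(\Phi(0),\Phi'(0)\sigma)=J(\sigma)$ for every $\sigma\in Bel(S)$. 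Taking $\sigma=\mu-\nu$ and unwinding the definition of $J=J_S$ gives
$$
F(\Phi(0),\Phi'(0)(\mu-\nu))=\sup_{\qds}\limsup_{n\to\infty}\left|\iint_S(\mu-\nu)\vp_n\,dxdy\right|,
$$
which together with the previous display establishes (\ref{Eq:binaryasym}). The reformulation (\ref{Eq:binary1}) then follows by dividing through by $t$ and passing to the limit $t\to 0_+$.

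There is no genuine obstacle here: the analytic content, namely the extraction of the first-order behavior of $\dat$ from the asymptotic Fundamental Inequalities, has already been absorbed into Theorem~\ref{Th:binaryasym} and Lemma~\ref{Th:ijhde}. The corollary amounts only to (i) checking that the linear curves $t\mu,\ t\nu$ are admissible in the variation formula, which is immediate, and (ii) simplifying the Finsler density at $\Phi(0)$ using the identity $f^0=\mathrm{id}$, which collapses the intricate pullback expression to the bare functional $J$.
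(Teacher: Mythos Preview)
Your proposal is correct and follows exactly the approach the paper intends: the paper simply states that the corollary ``is an immediate consequence of Theorem~\ref{Th:binaryasym}'' without further argument, and you have supplied the straightforward details of that specialization (linear curves $t\mu$, $t\nu$, and the simplification $F(\Phi(0),\Phi'(0)\sigma)=J(\sigma)$ at the basepoint).
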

Generally, the infinitesimal form of the asymptotic \T metric on $\ats$ in the fiber of the tangent space over the identity is given by the infinitesimal asymptotic norm (see Theorem 10 of Chapter 14 in \cite{EGL}). Precisely,
\begin{equation}
\dat([[0]],[[t\mu]])=t\sup_{\qds}\limsup_{n\to\infty} \left|\iint_S\mu\phi_n \, dxdy\right|+o(t),\; t\to 0_+.
\end{equation}
Now, we see that (\ref{Eq:binaryasym}) of Corollary \ref{Th:corvar} gives a binary infinitesimal form of the asymptotic \T metric on $\ats$.

\begin{centering}\section{\!\!\!\!\!{. }Geodesics joining  non-substantial points with the basepoint}\label{S:geodesic}\end{centering}

$\mut$ (or
$\azmu$) is called a substantial point in $\atde $  (or $AZ(\de)$), if every
 $p\in \pa \de$ is a substantial boundary point for $\mut$ (or
$\azmu$); otherwise, $\mut$ (or $\azmu$) is called a non-substantial point.

Let $SP$ and $ISP$
 denote the collection of all (infinitesimal)
substantial points in $\atde$ and $AZ(\de)$, respectively. Since every substantial point can be approximated by a sequence of non-substantial points, it is clear that $\atde\backslash SP$
and $AZ(\de)\backslash ISP$ are open and dense in
 $\atde$ and $AZ(\de)$,
respectively.

Let $d_H(z_1,z_2)$ denote the hyperbolic distance between two points
$z_1$, $z_2$ in  $\de$, i.e.,
\begin{equation*}
d_H(z_1,z_2)=\frac{1}{2}\log\frac{1+|\frac{z_1-z_2}{1-\bar
z_1z_2}|} {1-|\frac{z_1-z_2}{1-\bar z_1z_2}|}.
\end{equation*}

To prove Theorem \ref{Th:geodesic}, we need a series of lemmas.

\begin{lemma}\label{Th:dist}
Let $t_1$, $t_2$ be two complex numbers and $k_1$, $k_2$ be
two real numbers. Then we have
\begin{equation}\label{Eq:dist}
\left|\frac{(t_1-t_2)k_1}{1-\ov{t_2}t_1k_1^2}\right|\leq\left|\frac{(t_1-t_2)k_2}{1-\ov{t_2}t_1k_2^2}\right|,\,
\text{ if }\;0<k_1\leq k_2\,\text{ and }\; k_2^2|t_1t_2|<1.
\end{equation}
\end{lemma}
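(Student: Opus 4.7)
\medskip

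The plan is to reduce the inequality to the monotonicity of a single real-variable function. Since the factor $|t_1-t_2|$ is common to both sides and we may assume it is nonzero (otherwise both sides vanish), the task reduces to showing that
\begin{equation*}
g(k):=\left|\frac{k}{1-\ov{t_2}t_1 k^2}\right|
\end{equation*}
is non-decreasing in $k$ on the interval $(0,k_*)$, where $k_*=|t_1 t_2|^{-1/2}$ (so that the hypothesis $k_2^2|t_1t_2|<1$ puts $k_1,k_2$ into this range).

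To do this, I would write $w=\ov{t_2}t_1$ (so $|w|=|t_1t_2|$) and substitute $u=k^2$. A direct expansion gives
\begin{equation*}
g(k)^2=\frac{u}{|1-wu|^2}=\frac{u}{1-2u\,\mathrm{Re}(w)+|w|^2 u^2}.
\end{equation*}
Differentiating this rational function of $u$, the cross terms involving $\mathrm{Re}(w)$ cancel, leaving
\begin{equation*}
\frac{d}{du}\,\frac{u}{1-2u\,\mathrm{Re}(w)+|w|^2u^2}=\frac{1-|w|^2u^2}{\bigl(1-2u\,\mathrm{Re}(w)+|w|^2 u^2\bigr)^2}.
\end{equation*}
The denominator is $|1-wu|^4\ge 0$, while the hypothesis $k_2^2|t_1t_2|<1$ gives $|w|^2 u^2=(|w|u)^2<1$ throughout the range $u\in(0,k_2^2]$. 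Hence the derivative is non-negative, $g(k)^2$ is non-decreasing on $(0,k_*)$, and consequently $g(k_1)\le g(k_2)$, which is what we need.

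I do not expect any essential obstacle; the only thing to be a little careful about is to handle the complex phase of $w=\ov{t_2}t_1$ correctly so that, after taking the modulus, the mixed terms $\mathrm{Re}(w)$ vanish cleanly in the numerator of the derivative. The sign condition $k_1\le k_2$ is used only at the last step, through monotonicity of $g$, and the strict inequality $k_2^2|t_1t_2|<1$ ensures that the derivative is nonnegative on the relevant closed interval (including its right endpoint).
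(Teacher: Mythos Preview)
Your proposal is correct and follows essentially the same approach as the paper: both arguments show the squared modulus is monotone in $k$ by differentiating and checking the sign, with your substitution $u=k^2$ being a cosmetic simplification that makes the cancellation of the $\mathrm{Re}(w)$ terms in the numerator especially transparent. The paper writes $F(k)=|t_1-t_2|^2k^2/(1+|t_1t_2|^2k^4-2k^2\,\mathrm{Re}(\ov{t_2}t_1))$ and verifies $F'(k)\ge 0$ on $(0,1/\sqrt{|t_1t_2|})$ directly, arriving at the same conclusion.
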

\begin{proof}
Without any loss of generality, we may assume that $t_1t_2\neq0$.
Let $k$ be a real variable and put
\begin{equation*}
F(k)=\left|\frac{(t_1-t_2)k}{1-\ov{t_2}t_1k^2}\right|^2=
\frac{|t_1-t_2|^2k^2}{1+|t_1t_2|^2k^4-2k^2Re(\ov{t_2}t_1)}.
\end{equation*}
It is easy to verify that $F'(k)\geq0$ as $k\in
(0,1/\sqrt{|t_1t_2|})$. Therefore $F(k)$ is an increasing function
on $(0,1/\sqrt{|t_1t_2|})$ and hence (\ref{Eq:dist}) holds.
\end{proof}

\begin{lemma}\label{Th:asynu1}
Let $\mu\in Bel (\de)$ and $p\in \pa \de$. Then, for any given
$\epsilon>0$,
\\
(1) if $\mu\in M(\de)$, then there exists a Beltrami differential
$\nu\in\emu$ such that $\nu$ is an asymptotic extremal  and
$h^*_p(\nu)<h_p(\emu)+\epsilon$;\\
(2) there exists a Beltrami differential $\nu\in\bmu$ such that
$\nu$ is an asymptotic  extremal and $b^*_p(\nu)< b_p(\bmu)+\epsilon$.
\end{lemma}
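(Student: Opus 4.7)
The strategy for (1) is a cut-and-paste construction at the level of quasiconformal maps, combining an asymptotically extremal representative of $\emu$ with one having small boundary dilatation at $p$; part (2) proceeds analogously at the level of Beltrami differentials. First dispose of the trivial case: if $p$ is substantial for $\emu$, i.e.\ $h_p(\emu)=h(\emu)$, any asymptotically extremal $\nu\in\emu$ (existence is standard by Strebel-type compactness) works, because for any compact $E\subset\de$ we may pick an open nbhd $V$ of $p$ in $\ov\de$ disjoint from $E$, giving $\esssup_{V\cap\de}|\nu|\le\esssup_{\de\setminus E}|\nu|$ and hence $h^*_p(\nu)\le h^*(\nu)=h(\emu)=h_p(\emu)<h_p(\emu)+\epsilon$. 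So assume $h_p(\emu)<h(\emu)$, and since the conclusion for smaller $\epsilon$ implies the one for larger $\epsilon$, further reduce to the case $h_p(\emu)+\epsilon/2<h(\emu)$.

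By the definitions, pick $\nu_1\in\emu$ and an open nbhd $U$ of $p$ in $\ov\de$ with $\esssup_{U\cap\de}|\nu_1|<h_p(\emu)+\epsilon/2$, and an asymptotically extremal $\nu_0\in\emu$. Write $f_i=f^{\nu_i}$. Choose open sets $V_1\subset V_2\subset U$ containing $p$ with $V_1\cap\pa\de=V_2\cap\pa\de$ (for instance, $V_2=V_1\cup K$ for some open $K$ compactly contained in $\de$), so that $A:=(V_2\setminus V_1)\cap\de$ is relatively compact in $\de$; enlarge $V_2$ if needed so that the Jordan curve $f_0(\pa V_2\cap\de)$ encloses $\ov{f_1(V_1\cap\de)}$ in $\de$. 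Define $f:\de\to\de$ by $f=f_1$ on $V_1\cap\de$, $f=f_0$ on $\de\setminus V_2$, and $f=\Psi$ on $A$, where $\Psi$ is a QC homeomorphism of the topological annulus $A$ onto the topological annulus $\de\setminus(f_1(V_1\cap\de)\cup f_0(\de\setminus V_2))$ realizing the prescribed boundary correspondences $f_1|_{\pa V_1\cap\de}$ and $f_0|_{\pa V_2\cap\de}$. Since $f_0,f_1$ agree with $f^\mu$ on $\pa\de$, so does $f$, and $\nu:=$ Beltrami$(f)\in\emu$. Because $\nu=\nu_1$ on $V_1\cap\de$, we get $h^*_p(\nu)\le h^*_p(\nu_1)<h_p(\emu)+\epsilon/2<h_p(\emu)+\epsilon$. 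For any compact $E\supset A$ in $\de$,
\[
\esssup_{\de\setminus E}|\nu|\le\max\bigl\{\esssup_{V_1\cap\de}|\nu_1|,\ \esssup_{\de\setminus E}|\nu_0|\bigr\};
\]
letting $E$ exhaust $\de$ yields $h^*(\nu)\le\max\{h_p(\emu)+\epsilon/2,\ h(\emu)\}=h(\emu)$, and $\nu\in\emu$ gives the reverse inequality, so $\nu$ is asymptotically extremal.

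The main technical obstacle is producing the interpolation $\Psi$: one must guarantee that $f_1(V_1\cap\de)$ and $f_0(\de\setminus V_2)$ are disjoint with complement a topological annulus in $\de$. As $V_2$ is enlarged (with $V_1$ fixed) the curves $f_0(\pa V_2\cap\de)$ sweep outward in $\de$ by continuity of $f_0$, while their endpoints on $\pa\de$ stay anchored at the endpoints of the arc $V_1\cap\pa\de=V_2\cap\pa\de$; this ensures the enclosing property eventually holds. Extending the quasisymmetric boundary correspondence on $\pa A$ to a QC homeomorphism between topological annuli is then classical. Part (2) is obtained by the same geometric partition, but directly at the level of Beltrami differentials: set $\nu:=\chi_{V_1\cap\de}\nu_1+\chi_A\tau+\chi_{\de\setminus V_2}\nu_0$ and choose $\tau\in L^\infty(A)$ satisfying the single linear constraint $\iint_A(\tau-\nu_0)\vp=\iint_{V_1\cap\de}(\nu_0-\nu_1)\vp$ for all $\vp\in Q(\de)$, which forces $\nu\in\bmu$; the existence of such $\tau$ (or of an approximation whose defect can be absorbed into $\epsilon$) rests on the injectivity of the restriction map $Q(\de)\to L^1(A)$ via analytic continuation, so that by Hahn-Banach duality the required functional on $Q(\de)$ is representable by a bounded function supported on $A$.
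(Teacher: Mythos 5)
Your Case 1 and the overall cut-and-paste idea match the paper, but your execution of Case 2 of part (1) has a gap exactly at the step you flag as the ``main technical obstacle.'' The paper avoids the welding problem altogether: it takes $\chi\in\emu$ with $h_p^*(\chi)$ small near $p$, restricts to $\de\setminus B(p)$, regards $[\chi]$ as a point of $T(\de\setminus B(p))$, chooses an asymptotically extremal representative $\nu_1$ of \emph{that} class (Theorem 2, p.~296 of Gardiner--Lakic), and glues $\nu_1$ with $\chi|_{B(p)}$. Since equivalence in $T(\de\setminus B(p))$ is rel the entire ideal boundary of the subdomain, including $\pa B(p)\cap\de$, the glued coefficient lies in $\emu$ automatically and no interpolating region is needed. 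Your version instead needs a qc interpolation $\Psi$, and hence needs $f_1(V_1\cap\de)\cap f_0(\de\setminus V_2)=\emptyset$ with complement a topological quadrilateral. The ``sweeping outward'' heuristic does not establish this: $f_0\circ f_1^{-1}$ has identity boundary values, so $f_0(z)$ stays within a fixed positive hyperbolic distance of $f_1(z)$; near the two endpoints of the arc $V_1\cap\pa\de$ the set $\de\setminus V_2$ comes within arbitrarily small hyperbolic distance of $V_1\cap\de$ no matter how large $V_2$ is, so $f_0(\de\setminus V_2)$ may penetrate $f_1(V_1\cap\de)$ near those corners for every admissible $V_2$. This step must be repaired, most naturally by the subdomain-Teichm\"uller-space argument above.

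Part (2) as proposed is not correct. You seek $\tau\in L^\infty(A)$ with $\iint_A(\tau-\nu_0)\vp\,dxdy=\iint_{V_1\cap\de}(\nu_0-\nu_1)\vp\,dxdy$ for all $\vp\in Q(\de)$ and invoke Hahn--Banach through the injective restriction $Q(\de)\to L^1(A)$. Injectivity is irrelevant here; representation by a bounded function supported on $A$ requires the estimate
\begin{equation*}
\left|\iint_{V_1\cap\de}(\nu_0-\nu_1)\vp\,dxdy\right|\leq C\iint_A|\vp|\,dxdy,\qquad \vp\in Q(\de),
\end{equation*}
which fails: for a sequence $\{\vp_n\}\subset\qde$ degenerating towards $p$, the right-hand side tends to $0$ because $A$ is compactly contained in $\de$, while the left-hand side need not tend to $0$ (if it always did, there would be nothing to correct in the first place). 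So no such $\tau$ exists in general, and the defect is of unit order, not absorbable into $\epsilon$. The paper's part (2) is the exact infinitesimal analogue of its part (1): take an infinitesimally asymptotically extremal representative of the class of $\chi|_{\de\setminus B(p)}$ in $Z(\de\setminus B(p))$ and glue; since $Q(\de)|_{\de\setminus B(p)}\subset Q(\de\setminus B(p))$, the glued differential automatically lies in $\bmu$.
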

\begin{proof}We only show the first part for the second part follows from a similar argument.

Case 1. $h_p(\emu)=h(\emu)$.

By Theorem 2 on page 296 of \cite{GL}, there exists a Beltrami differential
$\nu\in\emu$ such that $\nu$ is an asymptotic extremal representative, that is, $h^*(\nu)=h(\emu)$. It obviously yields $h^*_p(\nu)<h_p(\emu)+\epsilon$.

 Case 2.  $h_p(\emu)<h(\emu):=h$.

 By the definition of
boundary dilatation, there exists a Beltrami differential
$\chi(z)\in \emu$ such that
$h_p^*(\chi)<\min\{h_p(\emu)+\epsilon,h\}$.  Let $B(p)=\{z\in \de:\;|z-p|<r\}$ for small $r>0$. Then, when $r$ is sufficiently small,
$|\chi(z)|<\min\{h_p(\emu)+\epsilon,h\}$ in $ B(p)$  almost everywhere.

 Restrict $\chi$ on
$\de\backslash  B(p)$ and regard $[\chi]$ as a point in the \T space  $T(\de\backslash
B(p))$. Then $h([\chi])=h$ (if necessary, let $B(p)$ be smaller). By
Theorem 2 on page 296 of \cite{GL} again, we can choose an asymptotic extremal in
$[\chi]$, say $\nu_1(z)$. Define
\begin{equation*}
\nu(z)=
\begin{cases}
\nu_1(z),& z\in \de\backslash  B(p),\\
\chi(z), &z\in   B(p).\end{cases}
\end{equation*}
Then, $\nu$ is the desired asymptotic extremal in $\emu$.
\end{proof}

\begin{lemma}\label{Th:corfan}
Suppose $\mu$ and $\nu$ be two asymptotically extremal Beltrami
differentials   in their classes in $\atde$ respectively.  If $h([[\mu]])=h([[\nu]])$ and
\begin{equation*}
\sup_{\qdde}\limsup_{n\to\infty}\left |\iint_\de(\mu-\nu)\phi_n \, dxdy\right|>0,
\end{equation*}
then the two geodesics $[[t\mu]]$ and
$[[t\nu]]$ ($0\leq t\leq 1$)  are different.
\end{lemma}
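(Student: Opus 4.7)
My plan is to reduce the statement to a single application of Corollary \ref{Th:corvar}, the binary infinitesimal formula for the asymptotic \T metric. First I would observe that the two curves $t\mapsto t\mu$ and $t\mapsto t\nu$ in $Bel(\de)$ are straight lines through the origin, continuous and differentiable at $0$ with derivatives $\mu$ and $\nu$ respectively (restricting to small $t$ so that both curves lie in $M(\de)$, as required by the corollary). Applying Corollary \ref{Th:corvar} to these two curves then yields
\[
d_{AT}([[t\mu]],[[t\nu]]) \;=\; t\sup_{\qdde}\limsup_{n\to\infty}\left|\iint_\de (\mu-\nu)\phi_n\,dxdy\right| + o(t), \quad t\to 0_+.
\]

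Denote the $\sup\limsup$ on the right by $C$. The integral-positivity hypothesis is exactly that $C>0$, so for all sufficiently small $t>0$ we get $d_{AT}([[t\mu]],[[t\nu]])\geq \tfrac{1}{2}Ct>0$, and hence $[[t\mu]]\neq[[t\nu]]$ at that parameter value. Thus the two rays $t\mapsto[[t\mu]]$ and $t\mapsto[[t\nu]]$ pass through different points of $\atde$ at the same $t$, and so cannot coincide as parameterized curves on $[0,1]$; the two geodesics are different.

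The other hypotheses play supporting rather than active roles. Asymptotic extremality of $\mu$ and $\nu$, together with the common value $h([[\mu]])=h([[\nu]])$, guarantees that each of $\{[[t\mu]]:0\le t\le 1\}$ and $\{[[t\nu]]:0\le t\le 1\}$ is an honest geodesic of the same length, so the assertion of two \emph{different} geodesics has the intended geometric content---notably in the common-endpoint case $[[\mu]]=[[\nu]]$, which is the situation used in Theorem \ref{Th:geodesic}. The positivity of $C$ is what actually produces the infinitesimal separation.

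There is no real obstacle here. The serious analytic work---the asymptotic Reich--Strebel inequalities and the derivation of the variation formula in Theorem \ref{Th:binaryasym}---has already been done in Section \ref{S:variation}. After that, the present lemma is a one-step consequence of Corollary \ref{Th:corvar}.
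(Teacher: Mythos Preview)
Your proposal is correct and matches the paper's approach exactly: the paper's proof is the single sentence ``It follows readily from Corollary \ref{Th:corvar} by letting $S=\de$,'' and you have supplied precisely the unpacking of that sentence, including the observation that $h([[\mu]])=h([[\nu]])$ forces the two parameterizations to have the same arc-length function so that equality of the geodesic images would force $[[t\mu]]=[[t\nu]]$ for all $t$.
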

\begin{proof}
It follows readily from Corollary \ref{Th:corvar} by letting $S=\de$.
\end{proof}

\textbf{Proof of Theorem \ref{Th:geodesic}.}
Suppose $\mut$ is not a substantial point in $\atde$. Let $h=h(\mut)$.
 There is  a non-substantial boundary point $q\in
\pa \de$ such that $h_q(\mut)<h$. By Lemma \ref{Th:asynu1},  it is
convenient to assume that $\mu $ is an asymptotic extremal
representative in $\mut$ satisfying  $h_q^*(\mu)<h$.

 By  the definition of boundary dilatation,
we can find a small neighborhood $B(q)$  of $q$ in $\de$ such that
$|\mu(z)|\leq \rho<h$ for some $\rho>0$ in $B(q)$ almost everywhere.
Therefore for any
 $\zeta\in \pa \de\cap  \pa B(q)$, $h_\zeta^*(\mu)\leq \rho$.

Choose  $\delta(z)\in
M(\de)$
 such that  $\|\delta\|_\infty\leq \beta<h-\rho$ and $\delta(z)=0$ when $z\in \de\backslash B(q)$.

Let $\Sigma$ be the collection of the real-valued functions $\sigma(t)$ defined on $[0,h]$ with the following conditions:
\\
(A)  $\sigma$ is  continuous with  $\sigma(0)=0$ and $\sigma(h)=0$,\\
(B) $\frac{|s-t|\rho/h+|\sigma(t)-\sigma(s)|\beta}{1-(s\rho/h+|\sigma(s)|\beta)(t\rho/h+|\sigma(t)|\beta)}
\leq\frac{|s-t|}{1-st},\; s,t\in[0, h]$.

We claim that $\Sigma$ contains uncountably many  elements. At first, let $\sigma$ be a Lipschitz continuous function on
$[0,h]$ with the following conditions,\\
(i) for some small $\alpha>0$, $|\sigma(s)-\sigma(t)|<\alpha |s-t|$, $t,\;s\in [0,h]$,\\
(ii) $\sigma(0)=0$ and $\sigma(h)=0$,\\
(iii) for some small $t_0$ in $(0,h)$,  $\sigma(t)\equiv0$ when $t\geq t_0$.

Secondly, we show that when $t_0$ and $\alpha$ are sufficiently small, $\sigma$ belongs to $\Sigma$, for which it suffices to show that $\sigma$ satisfies the condition (B). Let $t,\;s\in [0,h]$. It is no harm to assume that $t\leq s$.

Case 1. $h\geq t\geq t_0$.

Since $\sigma(s)=\sigma(t)=0$, by Lemma \ref{Th:dist}, we have
\begin{align*}
&\frac{|s-t|\rho/h+|\sigma(t)-\sigma(s)|\beta}{1-(s\rho/h+|\sigma(s)|\beta)(t\rho/h+|\sigma(t)|\beta)}
=\frac{|s-t|\rho/h}{1-st(\rho/h)^2}\\
&\leq\left|\frac{s-t}{1-s t}\right|.
\end{align*}

Case 2. $0\leq t< t_0$.

Put $\gamma=\rho/h+\alpha\beta$ and choose small $\alpha>0$ such that  $\gamma<1$.  On the one hand, since $|\sigma(t)|\leq \alpha t$ and $|\sigma(s)|\leq \alpha s$, it holds that
\begin{align*}
&\frac{|s-t|\rho/h+|\sigma(t)-\sigma(s)|\beta}{1-(s\rho/h+|\sigma(s)|\beta)(t\rho/h+|\sigma(t)|\beta)}
\leq \left|\frac{(s-t)(\rho/h+\alpha\beta)}{1-(s\rho/h+|\sigma(s)|\beta)(t\rho/h+|\sigma(t)|\beta)}\right|\\
&\leq \left|\frac{(s-t)(\rho/h+\alpha\beta)}{1-[\rho/h+\alpha\beta][t_0(\rho/h+\alpha\beta)]}\right|
=\gamma\left|\frac{s-t}{1-t_0\gamma^2}\right|.
\end{align*}
On the other hand, we have
\begin{align*}
\left|\frac{s-t}{1-s  t}\right|\geq\left|\frac{s-t}{1+t_0}\right|.
\end{align*}
When $t_0$ is sufficiently small, we can get
\begin{align*}
\left|\frac{s-t}{1+t_0}\right|\geq \gamma\left|\frac{s-t}{1-t_0\gamma^2}\right|.
\end{align*}
Therefore, when $t_0$ and $\alpha$ are sufficiently small, $\sigma$ satisfies the condition (B).

For a given $\sigma\in \Sigma$, define for $t\in [0,h]$,
 \begin{equation}\label{Eq:nonsub}
 \mu_t(z)=
 \begin{cases}
t\mu(z)/h,&z\in \de\backslash B(q),\\
t\mu(z)/h+\sigma(t)\delta(z),&z\in  B(q).
 \end{cases}
 \end{equation}

\textit{Step 1.} We prove that $\{[[\mu_t]]:t\in [0,h]\}$ is a geodesic connecting $\oo$ and  $\mut$. It is sufficient to verify
that whenever $t,s\in [0,h]$,
\begin{equation}\label{Eq:sst}
d_{AT}([[\mu_t]],[[\mu_s]])=d_H(t,s)=\frac{1}{2}\log\frac{1+|s-t|/(1-st)}{1-|s-t|/(1-st)}.
\end{equation}

Let $f_s:\, \de\to \de$ and $f_t:\, \de\to \de$ be \qc
mappings with Beltrami differentials $\mu_s$ and $\mu_t$
respectively. It is convenient to assume that $t\neq 0$ and $t\neq s$. Set $F_{s,t}=f_s\circ f_t^{-1}$ and assume that the
Beltrami differential of $F_{s,t}$ is $\nu_{s,t}$. Then a simple
computation shows,
\begin{equation*}
\nu_{s,t}\circ
f_t(z)=\frac{1}{\tau}\frac{\mu_s(z)-\mu_t(z)}{1-\ov{\mu_t(z)}\mu_s(z)},
\end{equation*}
where $z=f_t^{-1}(w)$ for $w\in  \de$ and $\tau=\ov{\partial
f_t}/\partial f_t$. We have

\begin{equation}\label{Eq:ft1}
\nu_{s,t}\circ
f_t(z)=\begin{cases}
\frac{1}{\tau}\frac{s-t}{1-  st|\mu(z)|^2/h^2 }\frac{\mu(z)}{h},\,&z\in \de\backslash B(q),\\
\frac{1}{\tau}\frac{(s-t)\mu(z)/h+[\sigma(s)-\sigma(t)]\delta(z)}{1-[s\mu(z)/h+\sigma(s)\delta(z)]
\overline{t\mu(z)/h+\sigma(t)\delta(z)}},\, &z\in B(q).
\end{cases}
\end{equation}
Since $\sigma(t)\in \Sigma$, due to condition (B) we see that
restricted on $f_t(B(q))$,
\begin{equation}\label{Eq:vvst}
\|\nu_{s,t}\|_\infty\leq\frac{|s-t|}{1-st}.
\end{equation}

Suppose  $p\in \pa \de$ is a substantial boundary point for $\mut$. By
Lemma \ref{Th:atslem1} there is a degenerating Hamilton sequence
$\{\psi_n\}\subset Q^1(\de)$ towards $p$ such that
\begin{align*}
h=\lim_{n\to\infty}\iint_\de \mu(z)\psi_n(z)dxdy.
\end{align*}
Then we have
\begin{align*}\label{Eq:qs11}
t=\lim_{n\to\infty}\iint_\de \mu_t(z)\psi_n(z)dxdy.
\end{align*}
On the other hand, it is easy to see that
$h([[\mu_t]])=h^*(\mu_t)=t$ and hence $\mu_t$ is an asymptotic
extremal.
   Therefore, the Beltrami differential $\wt \mu_t$ of  $f^{-1}_t$ is also an asymptotic extremal
where ${\wt\mu}_t=-\mu_t({f}_t^{-1})\ov{\partial
{f}_t^{-1}}/\partial {f}_t^{-1}$. $f_t(p)$ is a
substantial boundary point for $[[\wt\mu_t]]$ and there is a
degenerating Hamilton sequence $\{\wt\psi_n\}\subset Q^1(\de)$
towards $f_t(p)$ such that
\begin{equation*}
\lim_{n\to\infty}\iint_{\de} \wt\mu_t\wt\psi_n(w)dudv=h([[\wt\mu_t]])=t.
\end{equation*}
 Furthermore,
\begin{align}\label{Eq:vst}
&\lim_{n\to\infty}\iint_{\de}
\nu_{s,t}(w)\wt\psi_n(w)dudv=\lim_{n\to\infty}\iint_{\de}\frac{s-t}{1-st}
\frac{{\wt\mu}_t}{t}\wt\psi_n(w)dudv=\frac{s-t}{1-st}.
\end{align}

 In terms of (\ref{Eq:ft1}) and
Lemma \ref{Th:dist}, it is not hard to prove that
$h^*_\zeta(\nu_{s,t})\leq \frac{|s-t|}{1-st}$ when $\zeta\in { \pa
\de\cap\pa (f_t(\de\backslash B(q))}$. Thus, by (\ref{Eq:vvst}),
(\ref{Eq:vst}) and Lemma \ref{Th:atslem1}, it follows  that
$h([[\nu_{s,t}]])=\frac{|s-t|}{1-st}$, $\nu_{s,t}$ is asymptotically
extremal and the equality (\ref{Eq:sst}) holds.

\textit{Step 2.}   We show that, when  $\sigma(t)$ varies over $\Sigma$ and $\delta(z)$ varies
over $ M(\de)$ suitably, respectively, we can get  infinitely many different geodesics.

Firstly, choose $\delta(z)$  in $M(\de)$ such that \begin{equation}\label{Eq:lakic}
\sup_{\qdde}\limsup_{n\to\infty}\left |\iint_\de\delta\vp_n \, dxdy\right|=c>0,
\end{equation}
where the  $supremum$ is over all sequences   $\{\vp_n\}$ in $\qdde$
degenerating towards $q$.

Secondly,  we  choose small $t_0$ in $(0,h)$, small $\alpha>0$   and $\sigma\in \Sigma$ such that $\sigma(t)\equiv0$ when  $t\in [t_0,h]$  and
 $\sigma(t)=\alpha t$  when $t\in [0,t_0/2]$.

\textit{Claim.} When  $\alpha$ varies in a small range,
the   geodesics $[[\mu_t]]$  ($t\in [0,h]$) are mutually different.

Let $\alpha_1$ and $\alpha_2$ be two small different positive numbers and
$\sigma_j(t)=\alpha_j t$  when $t\in [0,t_0]$ ($j=1,2$), respectively.
Now, the corresponding expression of equation (\ref{Eq:nonsub}) is
 \begin{equation*}\mu_t^j(z)=
 \begin{cases}
t\mu(z)/h,&z\in \de\backslash B(q),\\
t\mu(z)/h+\sigma_j(t)\delta(z),&z\in B(q),\; j=1,2.
 \end{cases}
 \end{equation*}

They correspond to
geodesics $G_j=\{[[\mu^j_t]]:t\in [0,h]\}$ ($j=1,2$), respectively.
Note that when $t\in [0,t_0/2]$,
 \begin{equation*}\mu_t^j(z)=
 \begin{cases}
t\mu(z)/h,&z\in \de\backslash B(q),\\
t\mu(z)/h+t\alpha_j\delta(z),&z\in B(q),\; j=1,2.
 \end{cases}
 \end{equation*}
 Define \begin{equation*}\mu^j(z)=
 \begin{cases}
\mu(z)/h,&z\in \de\backslash B(q),\\
\mu(z)/h+\alpha_j\delta(z),&z\in B(q),\; j=1,2.
 \end{cases}
 \end{equation*}

Since
\begin{align*}\sup_{\qdde}&\limsup_{n\to\infty} \left|\iint_\de(\mu^1-\mu^2)\vp_n \, dxdy\right|=
\sup_{\qdde}\limsup_{n\to\infty}\left |\iint_\de(\alpha_1-\alpha_2)\delta\vp_n \, dxdy\right|\\
\geq &|\alpha_1-\alpha_2|c>0,
\end{align*}
by Lemma \ref{Th:corfan}, the
geodesics $G_1$ and $G_2$ are  different.

 If fix small $\alpha>0$ and let $\delta$ vary suitably in $M(\de)$, then we can also get infinitely many geodesics as desired.
The proof of Theorem \ref{Th:geodesic} is completed.

Suppose $\mu$ ($\neq 0$) is a non-Strebel extremal. Then $[t\mu]$ and $[[t\mu]]$ ($t\in [0,1]$) are the geodesics in $\tde$ and $\atde$ respectively. If  $\mu$ is uniquely extremal  in $\emu$  with constant modulus, then the geodesic joining $\emu$ with $[0]$ is unique in $\tde$ \cite{ELi}. Suppose $\mut$ is  a non-substantial point in $\atde$ in addition. Then by Theorem \ref{Th:geodesic} there are infinitely many geodesics joining $\mut$ with $\oo$ in $\atde$.
The example given in Theorem 3.1 of \cite{Fan} provides such a  non-Strebel extremal $\mu$. But Fan's  proof  is lengthy and complicated. Here we give a new example.

\textit{Example.} Let $\phi(z)$ be  holomorphic  on $\ov\de$ except has poles of at most order 2 on $\pa\de$. Assume that $\phi(z)$ has a second-order pole at $z=1$. Then by Reich's result \cite{Re2}, $\mu=k\frac{\ov\phi}{|\phi|}$ ($k\in (0,1)$) is uniquely extremal and $\emu$ is a non-Strebel point in $\tde$.   In addition, it is easy to check that  $h^*_\zeta(\mu)=0$ if $\zeta\in \pa\de$ is neither a pole  nor a zero  of $\phi(z)$. Therefore, $\mut$ is not a substantial point in $\atde$.

\begin{centering}\section{\!\!\!\!\!{. }Straight lines containing two points}\label{S:line}\end{centering}

We say that $\mu$ is a non-Strebel extremal if  it is an extremal representative in the non-Strebel point $\emu$ (or $\emu_Z$).
The following lemma says that a non-Strebel extremal  as an asymptotic extremal representative always exists in a class $\mut$.

\begin{lemma}\label{Th:nonstrebel}

Let $\mu\in Bel (S)$. Then,
\\
(1) if $\mu\in M(S)$, then there exists a Beltrami differential
$\nu\in\mut$ such that $\nu$ is a non-Strebel extremal;\\
(2) there exists a Beltrami differential $\nu\in\azmu$ such that
$\nu$ is a non-Strebel extremal.
\end{lemma}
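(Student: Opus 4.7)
The plan is to construct $\nu$ as a modulus truncation of an asymptotically extremal representative. Set $h:=h(\mut)$ in part (1) and $h:=b(\azmu)$ in part (2); in each case the non-Strebel extremal condition is equivalent to $\|\nu\|_\infty = h$ (for part (1): $\|\nu\|_\infty = k_0([\nu]) = h([\nu]) = h$; for part (2): $\|\nu\|_\infty = \|[\nu]_Z\| = b([\nu]_Z) = h$). So my target is to find such a $\nu$ inside $\mut$ (resp.\ $\azmu$).

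First I would invoke Theorem 2 on page 296 of \cite{GL} (as already used in the proof of Lemma \ref{Th:asynu1}) to pick an asymptotically extremal $\mu_0$: in part (1), $\mu_0\in\mut$ with $h^*(\mu_0)=h$; in part (2), $\mu_0\in\azmu$ with $b^*(\mu_0)=h$. Then define
\[
\nu(z):=\begin{cases}\mu_0(z), & |\mu_0(z)|\le h,\\[2pt] h\,\mu_0(z)/|\mu_0(z)|, & |\mu_0(z)|>h.\end{cases}
\]
By construction $\|\nu\|_\infty\le h$. The deformation set $A:=\{|\mu_0|>h\}$ is $\sigma$-compact: $A=\bigcup_{n\ge 1}A_n$ with $A_n:=\{|\mu_0|>h+1/n\}$, and by definition of $h^*(\mu_0)=h$ (resp.\ $b^*(\mu_0)=h$) each $A_n$ is contained in a compact subset of $S$. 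Moreover $|\nu-\mu_0|\le 1/n$ on $A\setminus A_n$.

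The key step is to check that $\nu$ stays in the original asymptotic class. For part (2), given any degenerating $\{\phi_k\}\subset Q^1(S)$ and $\epsilon>0$, I would pick $n$ with $1/n<\epsilon$ and split
\[
\iint_S(\nu-\mu_0)\phi_k\,dxdy=\iint_{A_n}(\nu-\mu_0)\phi_k\,dxdy+\iint_{A\setminus A_n}(\nu-\mu_0)\phi_k\,dxdy.
\]
The first term tends to $0$ as $k\to\infty$ because $\phi_k\to 0$ uniformly on the compact $A_n$ and $\nu-\mu_0$ is bounded; the second is $\le\epsilon\|\phi_k\|_1=\epsilon$. Taking $\limsup_{k\to\infty}$ and then $\epsilon\to 0^+$ yields $\nu-\mu_0\in[[0]]_{AZ}$, so $\nu\in\azmu$. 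For part (1), I would instead examine the composition $F:=f^\nu\circ(f^{\mu_0})^{-1}$: its Beltrami coefficient $\mu_F$ vanishes outside $f^{\mu_0}(A)$, and the estimate $|\mu_F|\le|\nu-\mu_0|/(1-\|\mu_0\|_\infty^2)$ gives $|\mu_F|\le C/n$ on the complement of the compact set $f^{\mu_0}(A_n)$, where $C$ depends only on $h$. Hence $h^*(\mu_F)\le C/n$ for every $n$, so $h^*(\mu_F)=0$, i.e.\ $F$ is asymptotically conformal, and therefore $\nu\in\mut$.

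Once $\nu\in\mut$ (resp.\ $\azmu$) is established, the inequalities $\|\nu\|_\infty\ge k_0([\nu])\ge h([\nu])=h(\mut)=h$ (resp.\ $\|\nu\|_\infty\ge\|[\nu]_Z\|\ge b([\nu]_Z)=b(\azmu)=h$) combined with $\|\nu\|_\infty\le h$ force equality throughout, exhibiting $\nu$ as a non-Strebel extremal. The main obstacle I anticipate is the Teichm\"uller-case verification that the composition $F$ is asymptotically conformal; but the $\sigma$-compact structure of $A$ together with the uniformly small amplitude $|\nu-\mu_0|\le 1/n$ on $A\setminus A_n$ makes the required dilatation bound routine.
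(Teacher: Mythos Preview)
Your proposal is correct and follows essentially the same approach as the paper: pick an asymptotically extremal representative via Theorem~2 on p.~296 of \cite{GL}, then truncate its modulus at $h$. The paper dispatches the verification that the truncation stays in the asymptotic class with ``it is easy to verify,'' whereas you supply the details; one minor correction is that your constant $C=1/(1-\|\mu_0\|_\infty^2)$ depends on $\|\mu_0\|_\infty$ rather than on $h$ alone, but since $\mu_0$ is fixed this does not affect the argument.
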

\begin{proof}
We only show the first part (1).

By Theorem 2  on page 296 of \cite{GL}, there is an asymptotic extremal representative in $\mut$, say $\mu$, such that $h(\mut)=h^*(\mu)$.
If $h^*(\mu)=0$, let $\nu$ be identically zero. If $h^*(\mu)>0$, put
\begin{equation*}\nu(z)=
\begin{cases}
\mu(z),& |\mu(z)|\leq h^*(\mu),\\
h^*(\mu)\mu(z)/|\mu(z)|,& |\mu(z)|> h^*(\mu).
\end{cases}
\end{equation*}
In either case, it is easy to verify that $\nu\in \mut$  and is a non-Strebel extremal.
\end{proof}
Let $\mu$ ($\neq0$) be a non-Strebel extremal. Then  there are infinitely many straight
lines containing $[0]$ and $\emu$ in $\tde$. However, it
cannot be directly inferred that there are infinitely many straight
lines containing $\oo$ and $\mut$ in $\atde$ since the topologies
induced by  metrics in $\tde$ and $\atde$ are essentially different.

\textbf{Proof of Theorem \ref{Th:line}.} Up to an isometry of $\atde$,
it suffices to prove that for any $\mut$ ($\neq \oo)$ in $\atde$, there
are infinitely many straight lines passing through $\mut$ and $\oo$. By Lemma \ref{Th:nonstrebel}, we choose a non-Strebel extremal representative in $\mut$, say $\mu$.
 Then $k_0(\emu)=h(\mut)=h^*(\mu):=h$.

\textit{Case 1.} $\mut$ is a substantial point.

Fix a boundary point
$p\in \pa \de$. Let  $B(p)=\{z\in \de:\;|z-p|<r\}$ for small $r>0$ and  $E=\de\backslash B(p)$.  Define for $t\in(-1,1)$,
\begin{equation}
\mu_t(z):=\begin{cases}
t\mu(z)/h,&z\in \de,\;|t|\leq h,\\
t\mu(z)/h,&z\in \de\backslash E,\; |t|>h,\\
sgn(t)\mu(z),& z\in E,\; |t|>h.
\end{cases}
\end{equation}
%where the function $sgn(t)=t/|t|$ when $t \in \mathbb{C}\backslash\{0\}$ and $sgn(0)=0$.

We prove that $G_E=\{[[\mu_t]]:\;t\in (-1,1)\}$ is   a straight
line passing through $\oo$ and $\mut$. Note that  $G_E$  differs from
 the straight line $G[\mu]=\{[[t\mu/h]]:\;t\in (-1,1)\}$ only when $|t|>h$. It is sufficient to show the
following two points: for any given $\rho\in (h,1)$,
\\
(i) $\dat([[\mu_{-\rho}]],[[\mu_{\rho}]])=d_H(-\rho,\rho)$;
\\ (ii) $\{[[\mu_t]]:\;t\in [0,\rho]\}$ and  $\{[[\mu_t]]:\;t\in [-\rho,0])\}$ are two geodesics
and \begin{equation}\label{Eq:rho1}
\dat([[\mu_{-\rho}]],\oo)=\dat(\oo,[[\mu_\rho]])=\frac{1}{2}d_H(-\rho,\rho).\end{equation}

(i) is relatively clear since on $E$, $|
\rho\mu(z)/|\rho||<|\rho\mu(z)/h|$ for $\rho>h$, so is
(\ref{Eq:rho1}). Due to symmetry, for (ii), it suffices to show
that $\{[[\mu_t]]:\;t\in [0,\rho]\}$ is a geodesic. This is reduced
to prove that $\{[[\mu_t]]:\;t\in [0,h]\}$ and  $\{[[\mu_t]]:\;t\in
[h,\rho])\}$ are two geodesics, and \begin{equation*}\label{Eq:rho2}
\dat(\oo,[[\mu_h]])+\dat([[\mu_{h}]],[[\mu_{\rho}]])=d_H(0,\rho).\end{equation*}
In such a case, we only need to check that $\{[[\mu_t]]:\;t\in
[h,\rho])\}$ is a geodesic with length $d_H(h,\rho)$. In fact, when
$\rho\leq t<s\leq h$,
using  the previous notation, we have
\begin{equation}\label{Eq:atsft1}
\nu_{s,t}\circ
f_t(z)=\begin{cases}
\frac{1}{\tau}\frac{s-t}{1-  st|\mu(z)|^2/h^2 }\frac{\mu(z)}{h},\,&z\in \de\backslash E,\\
0,\,
&z\in E.
\end{cases}
\end{equation}
Now, it is evident that
\[d_{AT}([[\mu_t]],[[\mu_s]])=\frac{1}{2}\log\frac{1+h([[\nu_{s,t}]])}{1-h([[\nu_{s,t}]])}=d_H(t,s)\; t,s\in [h,\rho].\]

Comparing $[[\mu_t]]$ with $[[t\mu/h]]$  as $|t|>h$, we find that
$[[\mu_t]]$ is no longer a substantial point since the boundary
 points in the interior of  $\pa \de\cap \pa E$ are
no longer substantial ones. Therefore, when the boundary point $p$ or the neighborhood
$B(p)$ varies, we get infinitely many different straight lines.

\textit{Case 2.} $\mut$ is not a substantial point.

By Theorem
\ref{Th:geodesic}, there are infinitely many geodesics connecting
$\oo$ and $\mut$. We can then  extend these geodesics to straight
lines by uniformly defining,
$[[\mu_t]]=[[t\mu/h]]$ for $t\in(-1,0)\cup(h,1)$. The verification  is similar to Case 1 and is omitted here.
The completes the proof of Theorem \ref{Th:line}.

\begin{centering}\section{\!\!\!\!\!{. }  Relationship
on  substantial boundary points for points along a geodesic}\label{S:property}\end{centering}

In this section, we investigate the relationship
on  substantial boundary points for the points along a
geodesic. We have the following result.
\begin{theorem}\label{Th:subprop}
 Suppose $h(\mut)=h\in (0,1)$ and $\{[[\mu_t]]:\,t\in (0,h)\}$ is a
geodesic connecting $\oo$ and $\mut$ such that
$\dat(\oo,[[\mu_t]])=d_H(0, t)$ for $t\in (0,h)$. If $p\in \pa \de$ is
a substantial boundary point for $\mut$, then $p$ is a substantial
boundary point for all $[[\mu_t]]$, $t\in (0,h)$.
\end{theorem}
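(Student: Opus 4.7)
The plan is to reduce the theorem to a Lipschitz-type estimate for the boundary-dilatation function $h_p:\atde\to[0,1)$ with respect to the hyperbolic metric on $[0,1)$. Once such an estimate is in hand, the theorem follows by comparing distances along the real segment $[0,h]$ onto which the hypothesis projects the geodesic $\{[[\mu_t]]\}$.

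First I would establish the key inequality
$$|d_H(h_p(\mut_1),0)-d_H(h_p(\mut_2),0)|\leq\dat(\mut_1,\mut_2),\qquad\mut_1,\mut_2\in\atde.$$
For representatives $\nu_i\in\mut_i$ with $\lambda=(\nu_1-\nu_2)/(1-\ov{\nu_2}\nu_1)$, the hyperbolic triangle inequality in $\de$ gives pointwise
$$d_H(|\nu_1(z)|,0)\leq d_H(|\nu_2(z)|,0)+d_H(|\lambda(z)|,0),$$
since $d_H(\nu_1(z),\nu_2(z))=d_H(|\lambda(z)|,0)$. Taking $\esssup_{z\in U\cap\de}$ over a small neighborhood $U$ of $p$, using the monotonicity identity $\esssup d_H(|\nu|,0)=d_H(\esssup|\nu|,0)$, and letting $U$ shrink to $p$, produces
$$d_H(h_p^*(\nu_1),0)\leq d_H(h_p^*(\nu_2),0)+d_H(\|\lambda\|_\infty,0).$$
To pass from representatives to classes, for any $\eps>0$ I would fix $\nu_2\in\mut_2$ with $h_p^*(\nu_2)<h_p(\mut_2)+\eps$ and then pick $\nu_1\in\mut_1$ with $d_H(\|\lambda\|_\infty,0)<\dat(\mut_1,\mut_2)+\eps$; the existence of such $\nu_1$ uses the one-sided formulation $\dat(\mut_1,\mut_2)=\inf_{\nu_1\in\mut_1}d_H(\|\lambda\|_\infty,0)$, itself a consequence of the translation-invariance of the asymptotic \T distance already exploited in the proof of Theorem \ref{Th:binaryasym}. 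Combining with $h_p^*(\nu_1)\geq h_p(\mut_1)$ and letting $\eps\to 0$ yields the inequality.

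With the Lipschitz estimate in hand, since $p$ is substantial for $\mut$ we have $h_p(\mut)=h$, while $h_p(\oo)=0$ trivially. Two applications give
$$d_H(h_p([[\mu_t]]),0)\leq d_H(0,t)\quad\text{and}\quad d_H(h,0)-d_H(h_p([[\mu_t]]),0)\leq d_H(t,h).$$
Because $0\leq t\leq h$ lie on the real diameter of $\de$ we have $d_H(0,h)=d_H(0,t)+d_H(t,h)$, so the second inequality forces $d_H(h_p([[\mu_t]]),0)\geq d_H(0,t)$. Hence $d_H(h_p([[\mu_t]]),0)=d_H(0,t)$, i.e.\ $h_p([[\mu_t]])=t=h([[\mu_t]])$, and Lemma \ref{Th:atslem1} then shows that $p$ is substantial for $[[\mu_t]]$.

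I expect the main obstacle to be the passage from representatives to classes in the first step: the $\esssup$, $\inf_U$, and the infimum over representatives must be interchanged carefully, and the one-sided formulation of $\dat$ is needed to break the joint approximation of $h_p^*(\nu_2)$ and of $\dat(\nu_1,\nu_2)$ into two independent steps. The pointwise triangle inequality and the final hyperbolic-segment additivity are elementary once this is settled.
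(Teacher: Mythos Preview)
Your proposal is correct, and the underlying mechanism is the same as the paper's: both rest on the pointwise hyperbolic/M\"obius sub-additivity
\[
|\mu_F|\le \frac{|\nu|+|\lambda\circ g|}{1+|\nu|\,|\lambda\circ g|}
\]
for a composition $F=\Lambda\circ g$, which at the boundary point $p$ turns into an inequality between $h_p^*$'s, and the geodesic condition makes that inequality an equality.

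The packaging, however, is genuinely different. The paper does not isolate a global Lipschitz estimate; instead it fixes an asymptotically extremal $\nu\in[[\mu_t]]$ with $h^*(\nu)=t$ and an asymptotically extremal $\lambda$ in the class of $f\circ g^{-1}$ with $h^*(\lambda)=\alpha$, reads off $h=(t+\alpha)/(1+t\alpha)$ from the geodesic additivity, and then the single chain
\[
h=h_p(\mut)\le h_p^*(\mu_F)\le \frac{h_p^*(\nu)+h_{g(p)}^*(\lambda)}{1+h_p^*(\nu)\,h_{g(p)}^*(\lambda)}\le \frac{t+\alpha}{1+t\alpha}=h
\]
forces $h_p^*(\nu)=t$ directly. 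This avoids your $\eps$-approximation and the one-sided reformulation of $\dat$ entirely, because the extremal representatives are chosen at the outset. Your route, by contrast, yields the reusable statement that $\mut\mapsto d_H(h_p(\mut),0)$ is $1$-Lipschitz on all of $\atde$, which is a pleasant byproduct not visible in the paper's argument; the price is the extra care you correctly anticipate in justifying the one-sided formula $\dat(\mut_1,\mut_2)=\inf_{\nu_1\in\mut_1}d_H(\|\lambda\|_\infty,0)$ for fixed $\nu_2$ (this does follow from translation invariance together with the existence of a non-Strebel extremal representative in each asymptotic class, as in Lemma~\ref{Th:nonstrebel}).
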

\begin{proof}
%Assume that $\mu$ is an asymptotic extremal in $\mut$ with $h^*_p(\mu)=

Let $h=h(\mut)$.
 Given $t\in (0,h)$, let $\nu(z)\in [[\mu_t]]$ be an asymptotic
 extremal representative  with $h^*(\nu)=t$. We need to show that $h^*_p(\nu)=t$.

Let $f:\;\de\to \de$ and $g:\;\de\to \de$ be the  \qc mappings with
Beltrami differentials $\mu$ and $\nu$, respectively. Let $\Lambda$
  be an asymptotic extremal \qc mapping
in the asymptotically equivalence class $[[f\circ
g^{-1}]]$. Assume that $\lambda(w)$ is the  Beltrami differential of
$\Lambda$ where $w=g(z)$, and  $h([[\lambda]])=h^*(\lambda)=\alpha$.
 Put $F=\Lambda\circ g$. Then $F$ and $f$ is
 asymptotically equivalent and
\begin{equation}\label{Eq:muf}
\mu_F=\frac{\nu+\lambda\circ g \cdot
\tau}{1+\overline{\nu}\cdot\lambda\circ g \cdot \tau},
\end{equation}
where $\tau=\overline{\pa_z g}/\pa_z g$. Since
\begin{equation*}
\dat(\oo,\mut)=\dat(\oo, [[\mu_t]])+\dat([[\mu_t]],
\mut),\end{equation*} we have
\begin{equation*}
\frac{1}{2}\log\frac{1+h}{1-h}=\frac{1}{2}\log\frac{1+t}{1-t}+\frac{1}{2}\log\frac{1+\alpha}{1-\alpha},
\end{equation*}
equivalently,
\begin{equation*}
\frac{1+h}{1-h}=\frac{1+t}{1-t}\cdot\frac{1+\alpha}{1-\alpha}.
\end{equation*}
This leads to
\begin{equation}\label{Eq:talpha}
h=\frac{t+\alpha}{1+t\alpha}.
\end{equation}
On the other hand, by (\ref{Eq:muf}) we have
\begin{align*}
|\mu_F|=\biggl|\frac{\nu+\lambda\circ g \cdot
\tau}{1+\overline{\nu}\cdot\lambda\circ g \cdot \tau}\biggr| \leq
\frac{|\nu|+|\lambda\circ g|}{1+|\nu||\lambda\circ g |}.
\end{align*}
Therefore, by the definition of boundary dilatation, we get
\begin{align}\label{Eq:ak}
h=h_p(\mut)\leq h^*_p(\mu_F)\leq \frac{h^*_p(\nu)
+h^*_{g(p)}(\lambda)}{1+h^*_p(\nu)h^*_{g(p)}(\lambda)}\leq
\frac{t+\alpha}{1+t\alpha}.
\end{align}
Notice that $h^*_p(\nu)\leq t$ and $h^*_{g(p)}(\lambda)\leq \alpha$.
Combining (\ref{Eq:talpha}) and (\ref{Eq:ak}), we   must have
$h^*_p(\nu)=t$. The concludes the proof.\end{proof}

The following corollary follows  immediately.
\begin{cor}
If $\mut$ is a substantial point in $\atde$, then every point in a
geodesic connecting $\oo$ and $\mut$
 is a substantial point.
\end{cor}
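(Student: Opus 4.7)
The plan is to reduce the corollary directly to Theorem~\ref{Th:subprop} by unpacking the definition of ``substantial point'' and applying the theorem boundary-point by boundary-point. Recall that $\mut$ is a substantial point in $\atde$ precisely when every $p \in \pa \de$ is a substantial boundary point for $\mut$. So the content to prove is: given a geodesic $\{[[\mu_t]]: t \in [0,h]\}$ joining $\oo$ and $\mut$, for every fixed $q \in \pa \de$ and every fixed $s \in [0,h]$, the point $q$ is a substantial boundary point for $[[\mu_s]]$.

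First I would normalize the parametrization so that $\dat(\oo, [[\mu_t]]) = d_H(0,t)$ for all $t \in [0,h]$; this is just the standard hyperbolic arc-length parametrization of any geodesic segment in $\atde$ starting at $\oo$, and the hypothesis of Theorem~\ref{Th:subprop} is stated in exactly this form. Next, fix an arbitrary $q \in \pa \de$. Since $\mut$ is substantial, $q$ is a substantial boundary point for $\mut$, i.e.\ $h_q(\mut)=h(\mut)=h$. Theorem~\ref{Th:subprop} then applies verbatim with $p$ replaced by $q$, and gives that $q$ is a substantial boundary point for $[[\mu_t]]$ for every $t \in (0,h)$. Because $q \in \pa \de$ was arbitrary, this shows every intermediate $[[\mu_t]]$ is a substantial point of $\atde$.

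The two endpoints are trivial: $[[\mu_h]] = \mut$ is substantial by hypothesis, and $\oo$ has $h(\oo)=0$, so $h_p(\oo)=0=h(\oo)$ for every $p \in \pa \de$, making $\oo$ vacuously substantial. Putting these cases together yields the corollary. There is no real obstacle here — Theorem~\ref{Th:subprop} has already done all of the analytic work (via the composition identity \eqref{Eq:muf} and the additivity of the asymptotic \T distance along the geodesic), and the corollary is essentially a quantifier swap: upgrading a pointwise statement about one substantial boundary point to a global statement by running the theorem over all $p \in \pa \de$ simultaneously.
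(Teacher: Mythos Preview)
Your proposal is correct and matches the paper's approach: the paper simply states that the corollary ``follows immediately'' from Theorem~\ref{Th:subprop}, and your argument is exactly the natural unpacking of that implication---apply the theorem for each $p\in\pa\de$ after reparametrizing the geodesic by hyperbolic arc length. The endpoint discussion you add is a harmless clarification.
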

There is a natural projection from $\tde$ onto $\atde$,
 \begin{align*}\pi:\tde &\to \atde,\\
 \emu&\to \mut.
 \end{align*}
If $\mu$ is a non-Strebel extremal, then the projection of any
geodesic connecting $[0]$ and $\emu$ in $\tde$ under  $\pi$ is a
geodesic connecting $\oo$ and $\mut$ in $\atde$. Therefore, we have
the following corollary.
\begin{cor}
If $\mu$ is a non-Strebel extremal and $p\in \pa \de$ is a substantial
boundary point for $\emu$,  then  $p$ is a substantial boundary
point for all points in a geodesic connecting $[0]$ and $\emu$ in
$\tde$.
\end{cor}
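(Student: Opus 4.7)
The plan is to push the given \T geodesic down to $\atde$ via the projection $\pi$, recognize the image as an asymptotic \T geodesic of the form required by Theorem \ref{Th:subprop}, and then translate the conclusion back to $\tde$. The enabling observation is that a non-Strebel extremal $\mu$ is automatically asymptotically extremal in $\mut$: indeed
$h^*(\mu) \leq \|\mu\|_\infty = k_0(\emu) = h(\emu) = h(\mut)$, while $h^*(\mu) \geq h(\mut)$ is immediate from the definition. Consequently $d_T([0],\emu)$ coincides with $d_{AT}(\oo,\mut)$, and by the definitions recorded in Section \ref{S:prel} the property ``$p$ is substantial for $\emu$'' is the same as ``$p$ is substantial for $\mut$''.

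Parametrize the given \T geodesic as $\{[\mu_t]:\,t\in [0,h]\}$ with $h = k_0(\emu) = h(\mut)$ and $d_T([0],[\mu_t]) = d_H(0,t)$. Since each \T equivalence class $[\mu_t]$ is contained in the asymptotic class $[[\mu_t]]$, we have $d_{AT}(\oo,[[\mu_t]]) \leq d_T([0],[\mu_t]) = d_H(0,t)$ and $d_{AT}([[\mu_t]],\mut) \leq d_H(t,h)$. Summing and comparing with $d_{AT}(\oo,\mut) = d_H(0,h)$, the triangle inequality forces equality in both estimates, so $\{[[\mu_t]]:\,t\in (0,h)\}$ is an asymptotic \T geodesic satisfying the hypotheses of Theorem \ref{Th:subprop}.

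Applying Theorem \ref{Th:subprop} then yields $h_p([[\mu_t]]) = h([[\mu_t]])$ for every $t\in (0,h)$. Using the identities $h_p([\mu_t]) = h_p([[\mu_t]])$ and $h([\mu_t]) = h([[\mu_t]])$ from Section \ref{S:prel}, this reads $h_p([\mu_t]) = h([\mu_t])$, which is exactly the assertion that $p$ is substantial for each $[\mu_t]$. The only step requiring genuine care is the descent of the \T geodesic to an asymptotic \T geodesic with the correct distance parametrization, which hinges on the non-Strebel hypothesis to identify the two distances; once this is settled, Theorem \ref{Th:subprop} does the real work and the rest is bookkeeping with the definitions.
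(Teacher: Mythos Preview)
Your proof is correct and follows exactly the approach the paper indicates: project the \T geodesic through $\pi$ to obtain an asymptotic \T geodesic from $\oo$ to $\mut$, then invoke Theorem~\ref{Th:subprop}, and translate back via the identities $h_p(\emu)=h_p(\mut)$ and $h(\emu)=h(\mut)$. You have supplied the details the paper omits---in particular the triangle-inequality argument showing the projected path is indeed a geodesic with the required parametrization---but the strategy is the same.
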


\begin{rem}
If $p\in \de$ is not a substantial boundary point for $\mut$, it is
possible that $p$ is a substantial boundary point for some point
(hence for infinitely many points) in the geodesic connecting $\oo$ and
$\mut$  (see Case 1 in  the proof of Theorem \ref{Th:line} in Section
\ref{S:line}).
\end{rem}

\begin{centering}\section{\!\!\!\!\!{. }An example for geodesics joining a substantial point with the basepoint}\label{S:example}\end{centering}
The situation on the geodesics joining  a  substantial point with the
basepoint is not clear.
The difficulty can be seen from Theorem \ref{Th:subprop}, for which the method used in the proof of Theorem \ref{Th:geodesic}  does not apply for  a substantial point. However, one must
not expect that the geodesic passing through a substantial point and
the basepoint is necessarily unique.  We now construct certain
counterexample to show how it is.

 We divide the construction
of example into three steps.

\textit{Step 1.} At first, we introduce an example, which we
describe below,
 every point $\zeta\in \pa \de$ is a
substantial boundary point.  The example was shown  in \cite{FS} and
was said to be due to Reich by an oral communication. For the sake
of clarity and completeness, here we include the detail for
construction which was demonstrated in \cite{FS}.

 \textit{Example.} Let $\phi_n$ be the
sequence defined by
\begin{equation*}
\phi_n(z):=\frac{(n+2)z^n}{2\pi}.
\end{equation*}
For a fixed number $k$, $0<k<1$, we define, for every $n\in\mathbb{N}$,
\begin{equation*}
\kappa_n(z):=k\frac{\bar z^n}{|z|^n}.
\end{equation*}
Then for $0\leq \rho_1\leq \rho_2\leq 1$
\begin{equation*}
\iint_{\rho_1<|z|<\rho_2}\kappa_n\phi_ndxdy=\frac{(n+2)k}{2\pi}\int_0^{2\pi}\int_{\rho1}^{\rho_2}
r^{n+1}drd\theta=k(\rho_2^{n+2}-\rho_1^{n+2})
\end{equation*}
and
\begin{equation*}
\iint_{\rho_1<|z|<\rho_2}|\phi_n|dxdy=\frac{n+2}{2\pi}\int_0^{2\pi}\int_{\rho1}^{\rho_2}
r^{n+1}drd\theta=\rho_2^{n+2}-\rho_1^{n+2}.
\end{equation*}
Choose a number $n_1\in \mathbb{N}$. Then there is a number $r_1$, $0<r_1<1$, with
\begin{equation*}
r_1^{n_1+2}>1-\frac{1}{2}
\end{equation*}
and we compute
\begin{equation*}
\iint_{0<|z|<r_1}\kappa_{n_1}\phi_{n_1}dxdy=kr_1^{n_1+2}>k(1-\frac{1}{2})
\end{equation*}
and
\begin{equation*}
\iint_{r_1<|z|<1}|\phi_{n_1}|dxdy=1-r_1^{n_1+2}<\frac{1}{2}.
\end{equation*}
Next we choose $n_2>n_1$ such that
\begin{equation*}
r_1^{n_2+2}<\frac{1}{2^2}.
\end{equation*}
Then there is a number $r_2$, $r_1<r_2<1$, such that
\begin{equation*}
r_2^{n_2+2}>1-\frac{1}{2^2}
\end{equation*}
and we may also have $r_2>r_1+\frac{1-r_1}{2}$. We compute
\begin{equation*}
\iint_{|z|<r_1}|\phi_{n_2}|dxdy=r_1^{n_2+2}<\frac{1}{2^2},
\end{equation*}
\begin{equation*}
\iint_{r_1<|z|<r_2}\kappa_{n_2}\phi_{n_2}dxdy=k(r_2^{n_2+2}-r_1^{n_2+2})
>k(1-\frac{1}{2^2}-\frac{1}{2^2})=k(1-\frac{1}{2})
\end{equation*}
and
\begin{equation*}
\iint_{r_2<|z|<1}|\phi_{n_2}|dxdy=1-r_2^{n_2+2}<\frac{1}{2^2},
\end{equation*}
Proceeding this construction, we get a sequence $n_j$ ($n_j\to
\infty$) and a sequence $r_j, $ $r_1<r_2<\cdots<1,$ $r_j\to 1$
($j\to \infty$). Furthermore, because of $r_{j-1}^{n_j+2}<1/2^j$,
$r_j^{n_j+2}>1-1/2^j$, we have for $j\geq2$
\begin{equation}\label{Eq:fs1}
\iint_{|z|<r_{j-1}}|\phi_{n_j}|dxdy=r_{j-1}^{n_j+2}<\frac{1}{2^j},
\end{equation}
\begin{equation}\label{Eq:fs2}
\iint_{r_{j-1}<|z|<r_j}\kappa_{n_j}\phi_{n_j}dxdy=k(r_j^{n_j+2}-r_{j-1}^{n_j+2})
>k(1-\frac{1}{2^j}-\frac{1}{2^j})=k(1-\frac{1}{2^{j-1}}),
\end{equation}
\begin{equation}\label{Eq:fs3}
\iint_{r_j<|z|<1}|\phi_{n_j}|dxdy=1-r_j^{n_j+2}<\frac{1}{2^j}.
\end{equation}
Clearly, $\{\phi_n\}\subset \qde$ is a degenerating sequence in
$\de$. Set $E_j=\{z:r_{j-1}\leq|z|<r_j\}$ for $j\geq1$ where let
$r_0=0$. Define
\begin{equation}\label{Eq:kappa1}
\kappa(z):=\begin{cases}
\kappa_{n_1}(z),&z\in E_1,\\
\kappa_{n_2}(z),&z\in E_2,\\
\;\,\;\vdots\\
\kappa_{n_j}(z),&z\in E_j,\\
\;\,\;\vdots
\end{cases}
\end{equation}
Then  $\kappa(z)$ has constant modulus $k$. Regard $\kappa(z)$ as
the complex dilatation of a \qc self-mapping $f$ of $\de$.
 By (\ref{Eq:fs1}),
(\ref{Eq:fs2}) and (\ref{Eq:fs3}) we have
\begin{equation}\label{Eq:sakan1}
\begin{split}
Re\iint_\de\kappa\phi_{n_j}dxdy&\geq Re\iint_{r_{j-1}<|z|<r_j}\kappa_{n_j}\phi_{n_j}dxdy\\
&- \iint_{|z|\leq r_{j-1}\text{ or}\;r_j\leq |z|<1}k|\phi_{n_j}|dxdy
\geq k(1-\frac{1}{2^{j-1}})-\frac{k}{2^{j-1}}.
\end{split}
\end{equation}
Thus, we have
\begin{equation*}
\lim_{j\to\infty}Re\iint_\de \kappa\phi_{n_j}dxdy=k
\end{equation*}
and hence $\phi_{n_j}$ is a Hamilton sequence for the extremal complex dilatation $\kappa$. Moreover,
as Fehlmann and Sakan
noted in their paper, by Theorem 1.1 in \cite{FS}, every $\zeta\in \pa\de$ is a substantial
 boundary point for $[\kappa]$.

\textit{Step 2.} With some  modification on  $\kappa$, we define a new  complex dilatation as follows,
\begin{equation*}
\mu(z):=\begin{cases}
\alpha\kappa(z),&z\in E_{2m-1},\\
\beta\kappa(z),&z\in E_{2m},
\end{cases}
\end{equation*}
where $m\geq1$ and the constants $\alpha,\beta\in [0,1/k)$.

\textbf{Claim.} $\mu$ is extremal  and $k_0([\mu])=\max\{\alpha k,\beta k\}$. Moreover, every
$\zeta\in \pa\de$ is a substantial boundary point for $[\mu]$.
\begin{proof} If $\alpha=\beta$, then $\mu=\alpha\kappa$ and the claim is a fortiori.

 Let $\alpha <\beta$ first.  By the reasoning deriving (\ref{Eq:sakan1}), we have
\begin{equation*}
\begin{split}
Re\iint_\de\mu\phi_{n_{2m}}dxdy&\geq \beta Re\iint_{E_{2m}}\kappa_{2m}\phi_{n_{2m}}dxdy\\
&- \beta\iint_{\de\backslash E_{2m}}  k|\phi_{n_{2m}}|dxdy \geq
\beta k(1-\frac{1}{2^{2m-1}})- \frac{\beta k}{2^{2m-1}}.
\end{split}
\end{equation*}
Thus, we get
\begin{equation*}
\lim_{m\to\infty}Re\iint_\de \mu\phi_{n_{2m}}dxdy=\beta k.
\end{equation*}
Hence $\mu$ is extremal with $\|\mu\|_\infty=\beta k$ and
$\phi_{n_{2m}}$ is a degenerating Hamilton sequence. Similarly, if
$\alpha >\beta$, then $\mu$ is extremal with $\|\mu\|_\infty=\alpha
k$ and  $\phi_{n_{2m-1}}$ is a degenerating Hamilton sequence.
Anyway, the aforementioned reason implies that
  every
$\zeta\in \pa\de$ is a substantial boundary point for $[\mu]$ or $\mut$. This Claim is proved.
\end{proof}

\textit{Step 3.} Fix $\alpha\in (0,1)$ and $\beta=1$. Then $\mu$ is
extremal with $\|\mu\|_\infty=k$ and
 $\mut$ is a substantial point in $\atde$. We construct infinitely many
geodesics connecting  $\mut$  and the basepoint.

Let $\Sigma'$  be the collection of the real-valued functions $\sigma(t)$ defined on $[0,k]$  with the following conditions:
\\
(A)  $\sigma$ is  continuous with $\sigma(0)=0$ and $\sigma(k)=k$,\\
(B) $\frac{|\sigma(s)-\sigma(t)|\alpha
}{|1-\sigma(t)\sigma(s)\alpha^2|} \leq\frac{|s-t|}{1-st}$, $t,s\in [0,k]$.\\
Since $0<\alpha<1$, it is easy to verify that  $\Sigma'$ contains uncountably
many  elements.

Given $\sigma \in \Sigma'$, define for $t\in[0,k]$,
\begin{equation}\label{Eq:new1}
\mu_t(z):=\begin{cases}
\sigma(t)\mu(z)/k,&z\in E_{2m-1},\,m\geq 1,\\
t\mu(z)/k,&z\in E_{2m},\,m\geq 1.
\end{cases}
\end{equation}
One easily proves that   $\{[[\mu_t]]:t\in
[0,k]\}$ is a geodesic connecting $\oo$ and  $\mut$.

Fix some $t_0$ in $(0,k)$. Choose
 $\sigma(t)\in \Sigma'$ such that  $\sigma(t)=\lambda t$  when $t\in [0,t_0]$ where  $\lambda\in (0,1)$ is sufficiently small.
We show that for different $\lambda$, these geodesics are mutually
different.

Let $\lambda_1$,  $\lambda_2\in (0,1)$  ( $\lambda_1>\lambda_2$) be small and
$\sigma_j(t)=\lambda_j t$  when $t\in [0,t_0]$ ($j=1,2$), respectively.
Now, on $[0,t_0]$ the corresponding expression of equation (\ref{Eq:new1}) is
\begin{equation}\label{Eq:new2}
\mu^j_t(z):=\begin{cases}
\lambda_jt\mu(z)/k,&z\in E_{2m-1},\,m\geq 1,\\
t\mu(z)/k,&z\in E_{2m},\,m\geq 1.
\end{cases}
\end{equation}

They correspond to
geodesic segments $G_j=\{[[\mu^j_t]]:t\in [0,t_0]\}$ ($j=1,2$), respectively.

 Define
 \begin{equation*}\label{Eq:new3}
\mu^j(z):=\begin{cases}
\lambda_j\mu(z)/k,&z\in E_{2m-1},\,m\geq 1.\\
\mu(z)/k,&z\in E_{2m},\,m\geq 1.
\end{cases}
\end{equation*}
Then
\begin{equation*}
\mu^1-\mu^2=\begin{cases}
(\lambda_1-\lambda_2)\mu(z)/k,&z\in E_{2m-1},\,m\geq 1,\\
0,&z\in E_{2m},\,m\geq 1.
\end{cases}
\end{equation*}

Since
\begin{align*}&\lim_{m\to\infty} \iint_\de(\mu^1-\mu^2)\phi_{n_{2m-1}} \, dxdy
=\frac{1}{k}(\lambda_1-\lambda_2)\lim_{m\to\infty}\iint_\de \mu\phi_{n_{2m-1}}dxdy\\
&=\lambda_1-\lambda_2>0,
\end{align*}
by Lemma \ref{Th:corfan}, the
geodesic segments $G_1$ and $G_2$ are  different.

The example serves  to give  infinitely many geodesics connecting
 the infinitesimal substantial point $\azmu$ and the basepoint in
$\azde$ as well.

In an infinitely dimensional \T space, there always exist closed
geo\-de\-sics and the spheres are not convex due to Li's work
\cite{Li3} (also see \cite{ELi}). Here a closed geodesic means to be locally shortest. As a byproduct of the
example, the following result in the asymptotic \T space is
fairly direct.
\begin{theorem}\label{Th:close}
There exist closed geodesics in the universal asymptotic \T space $\atde$ and hence
the spheres in $\atde$ are not convex.
\end{theorem}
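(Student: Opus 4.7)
The strategy is to extract a closed geodesic directly from the example constructed in Section \ref{S:example}, and to deduce the non-convexity of spheres as an automatic consequence, in parallel with Li's approach in \cite{Li3}.

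First, I would pick two distinct small parameters $\lambda_1 \neq \lambda_2$ from the family of Section \ref{S:example} and denote by $G_1$ and $G_2$ the associated geodesics joining $\oo$ and $\mut$, each parameterized by $t \in [0,k]$ via $t \mapsto [[\mu^j_t]]$. Both segments have length $\frac{1}{2}\log\frac{1+k}{1-k}$, and their genuine distinctness (not merely as parameterized curves but as subsets of $\atde$) is guaranteed by Lemma \ref{Th:corfan} applied to the sequence $\{\phi_{n_{2m-1}}\}$. I would then define $\gamma$ to be the concatenation of $G_1$ followed by the reverse of $G_2$, a closed loop based at $\oo$, passing through $\mut$, of total length $L=\log\frac{1+k}{1-k}$.

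Next, I would verify that $\gamma$ is \emph{locally shortest}. At every point in the interior of $G_1$ or $G_2$ a sufficiently small subarc of $\gamma$ is entirely contained in one of the two geodesic pieces, hence is itself a geodesic in $\atde$. At the two corner points $\oo$ and $\mut$, each small one-sided subarc of $\gamma$ again lies entirely within one of $G_1, G_1^{-1}, G_2, G_2^{-1}$, so $\gamma$ still realizes the metric distance along each side of every one of its points. This is exactly the ``locally shortest'' property referred to in the paragraph preceding the theorem, so $\gamma$ qualifies as a closed geodesic in $\atde$.

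Finally, for the non-convexity of spheres, I would choose two diametric points $A,B$ on $\gamma$ at intrinsic distance $L/2$ along $\gamma$; the two arcs of $\gamma$ between them, each of length $L/2$, are both geodesics of $\atde$ by the preceding step, yielding two distinct geodesics joining $A$ and $B$. A standard metric-geometry argument (compare \cite{Li3,ELi}) shows that convexity of every closed ball $\overline{B_r(p)}$ would force the function $d_{AT}(\cdot,p)$ to be convex along geodesics and, in particular, would force uniqueness of geodesics between any two points of $\atde$; the non-uniqueness just exhibited therefore rules out convex spheres. The step I expect to require the most care is pinning down the intended one-sided reading of ``locally shortest'', since a two-sided subarc of $\gamma$ straddling one of the corners $\oo,\mut$ will in general fail to be a geodesic (the two infinitesimal directions at a corner are distinct, and a calculation via Corollary \ref{Th:corvar} applied to $\mu^1,\mu^2$ shows that the diameter of the corner arc is strictly less than its length).
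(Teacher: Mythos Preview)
Your concatenation of two distinct geodesics $G_1$ and $G_2^{-1}$ does \emph{not} produce a closed geodesic in the intended sense. ``Locally shortest'' here (as in Li's work \cite{Li3} and in Busemann's terminology) means that every point of the loop has a neighbourhood on which the curve is an isometric image of an interval; this is a two-sided condition. You yourself observe, via Corollary~\ref{Th:corvar}, that a short symmetric arc of $\gamma$ straddling the corner $\oo$ has length roughly $2\epsilon$ while its endpoints $[[\mu^1_\epsilon]]$, $[[\mu^2_\epsilon]]$ are at distance roughly $|\lambda_1-\lambda_2|\alpha\,\epsilon<2\epsilon$. So the arc is strictly longer than the distance between its endpoints and $\gamma$ fails to be locally minimizing at $\oo$ (and likewise at $\mut$). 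Your proposed ``one-sided reading'' is not the standard notion and would trivialize the result: any loop built from finitely many geodesic segments would then qualify.

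The paper circumvents exactly this difficulty by a different construction. It builds four points $[[\eta_1]],\dots,[[\eta_4]]$ forming a ``square'' with sides of length $R$ and diagonals of length $2R$, so that any two \emph{consecutive} sides, say $[[\eta_1]]\to[[\eta_2]]\to[[\eta_3]]$, have total length $2R=\dat([[\eta_1]],[[\eta_3]])$ and hence concatenate to a genuine geodesic. Consequently every small two-sided subarc of the quadrilateral, even one crossing a vertex, is an isometric image of an interval, and the loop is a bona fide closed geodesic. The non-convexity of spheres then follows concretely: the sphere of radius $R$ about $[[\eta_1]]$ contains $[[\eta_2]]$ and $[[\eta_4]]$, and one of the two geodesics joining them (the one through $[[\eta_3]]$) lies outside the closed ball. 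To repair your approach you would need the two geodesics $G_1,G_2$ to leave $\oo$ (and $\mut$) in \emph{opposite} directions in the Finsler sense, which the family of Section~\ref{S:example} does not provide.
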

\begin{proof}
Define
\begin{equation*}
\eta_1(z):=\begin{cases}
\kappa(z),&z\in E_{2m-1}, \,m\geq 1,\\
0,&z\in E_{2m},\,m\geq 1,
\end{cases}
\end{equation*}

\begin{equation*}
\eta_2(z):=\begin{cases}
0,&z\in E_{2m-1},\,m\geq 1,\\
\kappa(z),&z\in E_{2m},\,m\geq 1,
\end{cases}
\end{equation*}
$\eta_3(z)=-\eta_1(z)$ and $\eta_4(z)=-\eta_2(z)$. Let
$R=\frac{1}{2}\log \frac{1+k}{1-k}$. It is easy to derive that
\begin{equation*}\dat\oo,[[\mu_j]]=R, \;j=1,2,3,4,\;
2R=\dat([[\eta_1]],[[\eta_3]])=\dat([[\eta_2]],[[\eta_4]])
\end{equation*}
and
\begin{equation*}
R=\dat([[\eta_1]],[[\eta_2]])=\dat([[\eta_2]],[[\eta_3]])=\dat([[\eta_3]],[[\eta_4]])=\dat([[\eta_4]],[[\eta_1]]).
\end{equation*}

Define for $t\in[0,k]$
\begin{equation*}
\mu_t(z):=\begin{cases}
\sigma(t)\kappa(z)/k,&z\in E_{2m-1},\,m\geq 1,\\
t\kappa(z)/k,&z\in E_{2m},\,m\geq 1,
\end{cases}
\end{equation*}
where $\sigma(t)=\frac{k-t}{1-tk}$ as $t\in [0,k]$. Using  the same
notation as in the proof of  Theorem \ref{Th:geodesic}, we have
\begin{equation}\label{Eq:atsft1}
\nu_{s,t}\circ f_t(z)=\begin{cases}\frac{1}{\tau}
\frac{\sigma(s)-\sigma(t)}{1-
\sigma(s)\sigma(t)}\frac{\kappa(z)}{k},\,&z\in E_{2m-1},\,m\geq 1,\\
\frac{1}{\tau}\frac{s-t}{1-  st }\frac{\kappa(z)}{k},\, &z\in
E_{2m},\,m\geq 1.
\end{cases}
\end{equation}
Observe that
\begin{equation*}
\biggl|\frac{\sigma(s)-\sigma(t)}{1-
\sigma(s)\sigma(t)}\frac{\kappa(z)}{k}\biggr
|=\biggl|\frac{\sigma(s)-\sigma(t)}{1- \sigma(s)\sigma(t) }\biggr|=
\biggl|\frac{s-t}{1- st }\biggr|=\biggl|\frac{s-t}{1-  st
}\frac{\mu(z)}{k}\biggl|,\,t,\,s\in [0,k].
\end{equation*}
It is not hard to prove that that whenever $t,\,s\in [0,k]$,
\begin{equation*}
d_T([\mu_t],[\mu_s])=\dat([[\mu_t]],[[\mu_s]])=d_H(t,s).\end{equation*}
Hence, $\{[\mu_t]:\;t\in[0,k]\}$ is a geodesic connecting $[\eta_1]$
and $[\eta_2]$ in the universal \T space $\tde$ as well as
 $\{[[\mu_t]]:\;t\in[0,k]\}$ is a geodesic connecting $[[\eta_1]]$ and $[[\eta_2]]$ in $\atde$. Similarly, one can construct
the  geodesic connecting $\eta_2$ and $\eta_3$, and so on. Thus, we construct closed geodesics in $\tde$ and $\atde$
  simultaneously. In particular, the latter is the image of the former under the natural projection
  $\pi$. Moreover, all points in the closed  geodesic are substantial ones.

Consider the sphere centered at $[[\eta_1]]$ and with radius $R$ in $\atde$. By the construction, there are two geodesics connecting $[[\eta_2]]$ and $[[\eta_4]]$. One is $[[\eta_2]]\to [[\eta_1]]\to [[\eta_4]]$ which is located inside the sphere; the other is $[[\eta_2]]\to [[\eta_3]]\to [[\eta_4]]$ which is located outside the sphere.
Now  it is clear that the sphere is not convex.

\end{proof}

One can check that the geodesic joining  $[\kappa]$ (defined by (\ref{Eq:kappa1})) with $[0]$ in $\tde$ is not unique. However, it is not clear up to present
 whether the geodesic connecting the
 substantial point
 $[[\kappa]]$ and $\oo$ in $\atde$ is unique.

\begin{centering}\section{\!\!\!\!\!{. }Geodesics and straight lines in the tangent space}\label{S:tangent}\end{centering}
The following theorem is the counterpart of Theorem \ref{Th:geodesic} in $\azde.$
\begin{theorem}
Suppose $\azmu$ is not a substantial point in $\azde$, i.e.,
$\azmu\in\azde\backslash ISP$. Then there are infinitely many
 geodesics connecting $\azmu$
and the
basepoint $\oaz$.
\end{theorem}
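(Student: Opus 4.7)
The plan is to mirror the construction in the proof of Theorem \ref{Th:geodesic} in a linear form suited to the Banach space $\azde$. Since the metric here is just the norm $\|\cdot\|$, the geodesic condition for $t\mapsto [[\mu_t]]_{AZ}$ reduces to the isometric identity $\|[[\mu_s - \mu_t]]_{AZ}\| = |s - t|$, with no hyperbolic correction factor; this makes the verification considerably cleaner than in $\atde$.

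Set $b = \|\azmu\|$. Since $\azmu\notin ISP$, fix a non-substantial boundary point $q$ with $b_q(\azmu) < b$. By Lemma \ref{Th:asynu1}(2) choose an asymptotically extremal representative $\mu\in\azmu$ with $b^*_q(\mu) < b$, and by a truncation in the style of Lemma \ref{Th:nonstrebel}(2) assume also $\|\mu\|_\infty = b$. Pick a small disc $B(q)$ on which $|\mu|\le\rho < b$ a.e. Because $b(\azmu) > b_q(\azmu)$, the Lakic identity $b(\azmu) = \max_p b_p(\azmu)$ furnishes a substantial boundary point $p\ne q$; shrink $B(q)$ so that $p\notin \ov{B(q)}$. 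Finally choose $\delta\in Bel(\de)$ supported in $B(q)$ with $\|\delta\|_\infty\le\beta < b - \rho$ and $b_q([[\delta]]_{AZ}) = c > 0$; such a $\delta$ is produced by placing a rescaled Reich-type sequence (as in Section \ref{S:example}) near $q$. Let $\Sigma$ denote the family of continuous $\sigma\colon[0,b]\to\mathbb{R}$ with $\sigma(0) = \sigma(b) = 0$ and Lipschitz constant at most $(b-\rho)/(b\beta)$; this family is infinite-dimensional.

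For each $\sigma\in\Sigma$ set $\mu_t(z) = (t/b)\mu(z) + \sigma(t)\delta(z)$, $t\in[0,b]$. A pointwise estimate, separately on $B(q)$ and on $\de\setminus B(q)$, gives $|\mu_t - \mu_s|\le |t - s|$ a.e., hence $\|[[\mu_t - \mu_s]]_{AZ}\|\le |t - s|$. For the matching lower bound, apply Lemma \ref{Th:atslem1} at $p$ to obtain an asymptotic Hamilton sequence $\{\psi_n\}\subset\qde$ for $\mu$ that degenerates uniformly on compact subsets of $\de\setminus\{p\}$ and satisfies $\iint_\de\mu\psi_n\,dxdy\to b$. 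Since $|\mu|\le\rho < b$ on $B(q)$ while $\|\mu\|_\infty = b$, the $L^1$-mass of $\psi_n$ must concentrate outside $B(q)$, so $\iint_{B(q)}|\psi_n|\,dxdy\to 0$ and hence $\iint_\de\delta\psi_n\,dxdy\to 0$; this yields $\iint_\de(\mu_t - \mu_s)\psi_n\,dxdy\to t - s$ and therefore $\|[[\mu_t - \mu_s]]_{AZ}\|\ge|t-s|$. Consequently $\{[[\mu_t]]_{AZ}:t\in[0,b]\}$ is a geodesic joining $\oaz$ and $\azmu$.

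To manufacture infinitely many distinct such geodesics, specialize to $\sigma_j(t) = \alpha_j t$ on an initial interval $[0,t_0]$ for two distinct small positive values $\alpha_1 \ne \alpha_2$. On $[0,t_0]$ we have $\mu_t^1 - \mu_t^2 = (\alpha_1 - \alpha_2)t\delta$, supported in $B(q)$. Pairing against an asymptotic Hamilton sequence $\{\vp_n\}\subset\qdde$ for $[[\delta]]_{AZ}$ degenerating toward $q$ (Lemma \ref{Th:atslem1} again) yields $\iint_\de(\mu_t^1 - \mu_t^2)\vp_n\,dxdy\to (\alpha_1 - \alpha_2)tc$, so $\|[[\mu_t^1]]_{AZ} - [[\mu_t^2]]_{AZ}\|\ge|\alpha_1 - \alpha_2|tc > 0$ for $t\in(0,t_0]$, proving the geodesics are pairwise distinct. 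The main technical point is producing a $\delta$ with $b_q([[\delta]]_{AZ}) > 0$: this positive boundary dilatation is what actually separates the geodesics, and the rest of the argument is a linearized version of the reasoning in Section \ref{S:geodesic}.
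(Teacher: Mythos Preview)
Your proposal is correct and follows essentially the same route as the paper's own proof: the same choice of an asymptotically extremal representative with small boundary dilatation at a non-substantial point $q$, the same perturbation $\mu_t = (t/b)\mu + \sigma(t)\delta$ with $\delta$ supported near $q$, the same Lipschitz condition on $\sigma$ (your bound $(b-\rho)/(b\beta)$ is exactly the paper's condition (B) rewritten), the same lower bound via a Hamilton sequence at a substantial boundary point $p$, and the same distinctness argument by varying the slope $\alpha$ of $\sigma$ near $0$. You are in fact a bit more careful than the paper in two places: you explicitly truncate to arrange $\|\mu\|_\infty = b$ (the paper silently needs this for its claim $\|\mu_s-\mu_t\|_\infty=|s-t|$), and you justify $\iint_\de \delta\psi_n\to 0$ by a mass-concentration argument rather than leaving it implicit.
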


\begin{proof}
 Let $b=b(\azmu)$.
Since  $\azmu$ is not substantial point,
there is a point $q\in
\pa \de $ which is not a substantial boundary point for $\azmu$.
By Lemma \ref{Th:asynu1}, we may assume that $\mu $ is
an asymptotic extremal representative in $\azmu$ such that
$b^*_q(\mu)<b$.

 By  the definition of boundary dilatation,
we can find a small neighborhood $B(q)$  of $q$ in $\de$ such that
$|\mu(z)|\leq \rho<b$ for some $\rho>0$ in $B(q)$ almost everywhere.
Therefore for any
 $\zeta\in \pa \de\cap \pa B(q)$, $b_\zeta^*(\mu)\leq \rho$.

 Choose  $\delta(z)\in
Bel(\de)$
 such that  $\|\delta\|_\infty\leq \beta<b-\rho$ and $\delta(z)=0$ when $z\in \de\backslash B(q)$.

Let $\Sigma''$ be the collection of the real-valued functions $\sigma(t)$ defined on $[0,b]$ with the following conditions:
\\
(A)  $\sigma$ is  continuous with  $\sigma(0)=0$ and $\sigma(b)=0$,\\
(B) $|s-t|\rho/b+|\sigma(t)-\sigma(s)|\beta
\leq|s-t|,$ $t,\;s\in [0,b]$.

Since $\rho<b$ and $\beta <b-\rho$,  $\Sigma''$ contains uncountably many  elements. In fact, if $\sigma$ is a Lipschitz continuous function on $[0,b]$ with the following conditions,\\
(i) for some small $\alpha>0$, $|\sigma(s)-\sigma(t)|<\alpha |s-t|$, $t,\;s\in [0,b]$,\\
(ii) $\sigma(0)=0$ and $\sigma(b)=0$,\\
(iii)  $\rho/b+\alpha \beta<1$,\\
then $\sigma\in \Sigma''$.

Given $\sigma\in \Sigma''$,
define for $t\in [0,b]$,
 \begin{equation}\label{Eq:infnonsub}
 \mu_t(z)=
 \begin{cases}
t\mu(z)/b,&z\in \de\backslash B(q),\\
t\mu(z)/b+\sigma(t)\delta(z),&z\in  B(q).
 \end{cases}
 \end{equation}
  We show that  $[[\mu_t]]_{AZ}$ ($t\in [0,b]$) is a geodesic.
 It is sufficient to verify
that
\begin{equation}\label{Eq:infsst}
\|[[\mu_s-\mu_t]]_{AZ}\|=|s-t|,\; t,\,s\in [0,b].
\end{equation}
At first, it is obvious that
\begin{align*}
\|\mu_s-\mu_t\|_\infty=|s-t|.
\end{align*}

Suppose $p\in \pa \de$ is a substantial boundary point for $\azmu$. By
Lemma \ref{Th:atslem1} there is a degenerating Hamilton sequence
$\{\psi_n\}\subset Q^1(\de)$ towards $p$ such that
\begin{align*}
b=\lim_{n\to \infty}\iint_\de \mu(z)\psi_n(z)dxdy.
\end{align*}
Therefore, we have
\begin{align*}
s-t=\lim_{n\to \infty}\iint_\de [\mu_s(z)-\mu_t(z)]\psi_n(z)dxdy,
\end{align*}
which implies the equality (\ref{Eq:infsst}).

It remains to show that there are infinitely many geodesics passing through $\azmu$ and $\oaz$
when  $\sigma$ varies over $\Sigma''$ and $\delta(z)$ varies
over $ Bel(\de)$ suitably, respectively.

Choose $\delta(z)$  in $Bel(\de)$ such that (\ref{Eq:lakic}) holds.
Fix a  small $t_0$ in $(0,b)$.  Choose $\sigma\in \Sigma''$ such that $\sigma(t)\equiv0$ when $t\geq t_0$  and
 $\sigma(t)=\alpha t$  when $t\in [0,t_0/2]$ where $\alpha>0$ satisfying  $\rho/b+\alpha \beta<1$.
Note that when $t\in [0,t_0/2]$,
 \begin{equation*}\mu_t(z)=
 \begin{cases}
t\mu(z)/b,&z\in \de\backslash B(q),\\
t\mu(z)/b+t\alpha\delta(z),&z\in B(q).
 \end{cases}
 \end{equation*}

Due to the equality (\ref{Eq:lakic}),
the   geodesics  $G_\alpha=\{[[\mu_t]]_{AZ}:t\in [0,b]\}$  are mutually different when $\alpha$ varies in a small range.

If fix small $\alpha>0$ and let $\delta$ vary suitably in $Bel(\de)$, then we can also get infinitely many geodesics as required.

\end{proof}

The  counterpart of Theorem \ref{Th:line} in the infinitesimal setting follows from an almost same argument.
\begin{theorem}\label{Th:infline}
For any two points in $\azde$,  there
     are   infinitely many straight lines containing them.
\end{theorem}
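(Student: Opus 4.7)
The plan is to mirror the proof of Theorem \ref{Th:line} in the linear setting of $\azde$. Since $\azde$ is a Banach space under the asymptotic norm, the translation $\azsigma \mapsto \azsigma - \azsigma_0$ is an isometry for every $\azsigma_0 \in \azde$, so it suffices to prove that for any $\azmu \neq \oaz$ in $\azde$ there exist infinitely many straight lines containing $\oaz$ and $\azmu$. Applying Lemma \ref{Th:nonstrebel}(2), I pick a non-Strebel extremal representative $\mu \in \azmu$, so that $\|\mu\|_\infty = b^*(\mu) = b(\azmu) =: b$.

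I then split into two cases paralleling Theorem \ref{Th:line}. In the substantial case, I fix a boundary point $p \in \pa\de$, a small neighborhood $B(p)$ of $p$ in $\de$, and set $E = \de \setminus B(p)$. For $t \in \mathbb{R}$ I define
\begin{equation*}
\mu_t(z) := \begin{cases} t\mu(z)/b, & z \in \de, \; |t| \leq b,\\ t\mu(z)/b, & z \in \de \setminus E, \; |t| > b,\\ \mathrm{sgn}(t)\,\mu(z), & z \in E, \; |t| > b.\end{cases}
\end{equation*}
A short case analysis (according to whether $s, t$ lie in $[-b, b]$ and whether they share or reverse sign outside it) gives $\|\mu_s - \mu_t\|_\infty \leq |s - t|$ for all $s, t \in \mathbb{R}$, which yields the upper bound $\|[[\mu_s - \mu_t]]_{AZ}\| \leq |s - t|$. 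For the matching lower bound, I invoke Lemma \ref{Th:atslem1}: since $p$ is substantial for $\azmu$, there is a degenerating Hamilton sequence $\{\psi_n\} \subset \qde$ toward $p$ with $\lim_n \iint_\de \mu \psi_n \, dxdy = b$; because $\psi_n$ tends to $0$ uniformly on every closed subset of $\overline\de \setminus \{p\}$, its mass on $\overline E$ vanishes in the limit, and a direct computation in each case produces $\lim_n \bigl|\iint_\de (\mu_s - \mu_t)\psi_n \, dxdy\bigr| = |s - t|$. Hence $\{[[\mu_t]]_{AZ} : t \in \mathbb{R}\}$ is a straight line; varying the boundary point $p$ (or the radius of $B(p)$) yields infinitely many distinct such lines, since at any level $|t| > b$ the corresponding $\mu_t$'s differ on macroscopic regions and these differences are detected by the appropriate Hamilton sequences.

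In the non-substantial case, the preceding theorem in this section already furnishes infinitely many geodesics $\{[[\mu_t]]_{AZ} : t \in [0,b]\}$ joining $\oaz$ and $\azmu$. I extend each such geodesic to a full straight line by setting $\mu_t := t\mu/b$ for $t \in (-\infty, 0) \cup (b, \infty)$, using the chosen non-Strebel extremal $\mu$; the verification that this is an isometric image of $\mathbb{R}$ is parallel to the substantial case and to Case 2 of Theorem \ref{Th:line}.

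The main technical obstacle is the case-by-case verification of $\|[[\mu_s - \mu_t]]_{AZ}\| = |s - t|$ for all $s, t \in \mathbb{R}$ in the substantial case, particularly for pairs straddling the thresholds $\pm b$. The decisive ingredient is that the degenerating Hamilton sequence toward $p$ makes the contribution of $E$ to the limiting integrals asymptotically negligible, so the lower bounds saturate the pointwise bound $|s - t|$ forced by $\|\mu_s - \mu_t\|_\infty \leq |s - t|$.
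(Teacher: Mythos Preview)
Your proposal is correct and follows essentially the same approach as the paper: the paper itself states only that Theorem~\ref{Th:infline} ``follows from an almost same argument'' as Theorem~\ref{Th:line}, and your sketch is precisely the linear (infinitesimal) analogue of that argument, with the same case split, the same construction of $\mu_t$, and the same appeal to Lemma~\ref{Th:nonstrebel} and Lemma~\ref{Th:atslem1}. The only cosmetic difference is that the paper distinguishes the lines in Case~1 by observing that the set of substantial boundary points of $[[\mu_t]]_{AZ}$ for $|t|>b$ depends on the choice of $B(p)$, whereas you phrase this via Hamilton sequences; both arguments are valid and equivalent in spirit.
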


The following is the  infinitesimal version of Theorem
\ref{Th:subprop}.
\begin{theorem}\label{Th:infsubprop}
 Suppose $b(\azmu)=b\in (0,+\infty)$ and $\{[[\mu_t]]_{AZ}:\,t\in (0,b)\}$ is a
geodesic connecting $\oaz$ and $\azmu$ such that
$d_{AZ}(\oaz,[[\mu_t]]_{AZ})=t$ for $t\in (0,b)$. If $p\in \pa \de$ is a
substantial boundary point for $\azmu$, then $p$ is a substantial
boundary point for all $[[\mu_t]]_{AZ}$, $t\in (0,b)$.
\end{theorem}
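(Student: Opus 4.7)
The plan is to mirror the structure of the proof of Theorem \ref{Th:subprop}, but to exploit the fact that in the infinitesimal setting the metric comes from the Banach seminorm on $AZ(\de)$, so composition of \qc maps is replaced by plain addition of Beltrami differentials. This should make the argument both shorter and cleaner than its non-infinitesimal analogue.

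First, I would fix $t \in (0, b)$ and invoke the infinitesimal analogue of Theorem 2 on page 296 of \cite{GL} to pick an asymptotically extremal representative $\nu \in [[\mu_t]]_{AZ}$ with $b^*(\nu) = t$; this is exactly the number we must show equals $b^*_p(\nu)$. Next, using that $\{[[\mu_t]]_{AZ}\}$ is a geodesic with $d_{AZ}(\oaz,[[\mu_t]]_{AZ}) = t$ and $d_{AZ}(\oaz,\azmu)=b$, the triangle equality in the geodesic forces
\begin{equation*}
d_{AZ}([[\mu_t]]_{AZ},\azmu) = b - t,
\end{equation*}
and since $d_{AZ}$ is the norm on $AZ(\de)$, this means $\|[[\mu - \nu]]_{AZ}\| = b - t$. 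A second application of the extremal existence result yields $\eta \in [[\mu - \nu]]_{AZ}$ with $b^*(\eta) = b - t$.

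Now I would form $\mu' := \nu + \eta$. Linearity of the infinitesimally asymptotic equivalence relation (it is defined by vanishing of linear functionals of the form $\sup_{\qdde}\limsup_{n}|\iint_\de (\cdot)\vp_n\,dxdy|$) gives $[[\mu']]_{AZ} = [[\nu]]_{AZ} + [[\eta]]_{AZ} = [[\mu_t]]_{AZ} + [[\mu - \nu]]_{AZ} = \azmu$, so $\mu' \in \azmu$. Because $p$ is substantial for $\azmu$, we have $b_p(\azmu) = b$ and hence $b^*_p(\mu') \geq b_p(\azmu) = b$. On the other hand, subadditivity of the boundary dilatation with respect to pointwise sum (an immediate consequence of the triangle inequality for $\operatorname{esssup}$ on neighborhoods of $p$) gives
\begin{equation*}
b^*_p(\mu') = b^*_p(\nu + \eta) \leq b^*_p(\nu) + b^*_p(\eta) \leq t + (b - t) = b.
\end{equation*}
Comparing the two inequalities forces equality everywhere; in particular $b^*_p(\nu) = t$, which by the equivalence in Lemma \ref{Th:atslem1} means $p$ is substantial for $[[\mu_t]]_{AZ}$.

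There is no real obstacle in this scheme; the only step that requires a moment of care is the linearity of the equivalence, which guarantees that $\nu + \eta$ lies in $\azmu$, and the subadditivity of $h^*_p$, which is purely routine. Compared with the proof of Theorem \ref{Th:subprop}, the hyperbolic composition formula $h = (t+\alpha)/(1+t\alpha)$ is replaced here by its linearization $b = t + (b-t)$, and the classical Beltrami composition identity $\mu_F = (\nu + \lambda\circ g \cdot \tau)/(1 + \bar\nu\lambda\circ g \cdot \tau)$ collapses to the plain sum $\mu' = \nu + \eta$, so the argument is essentially a one-line triangle inequality squeezed between two bounds.
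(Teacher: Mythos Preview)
Your proposal is correct and is precisely the linearised analogue of the paper's proof of Theorem~\ref{Th:subprop}, which is what the paper implicitly intends: the paper states Theorem~\ref{Th:infsubprop} without a separate proof, indicating that one should simply replace the hyperbolic composition $h=(t+\alpha)/(1+t\alpha)$ and the Beltrami composition formula by the additive identity $b=t+(b-t)$ and the plain sum $\mu'=\nu+\eta$, exactly as you have done.
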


At last, we end the paper with the infinitesimal version of Theorem \ref{Th:close}.
\begin{theorem}
There exist closed geodesics in the tangent space $\azde$ and hence
the spheres in $\azde$ are not convex.
\end{theorem}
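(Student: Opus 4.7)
The plan is to imitate the construction in the proof of Theorem \ref{Th:close}, replacing the asymptotic \T distance $\dat$ by the infinitesimal asymptotic norm $\|\cdot\|_{AZ}$ throughout. Using the same $\kappa$ from (\ref{Eq:kappa1}) and the annuli $E_j$ from Step 1 of the Example in Section \ref{S:example}, I would define the four Beltrami differentials $\eta_1,\eta_2,\eta_3=-\eta_1,\eta_4=-\eta_2$ exactly as in the proof of Theorem \ref{Th:close}, and show that in $\azde$ these four classes form a geodesic quadrilateral of side length $k$ inscribed in a ``sphere'' of radius $k$ about $\oaz$.

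The first step is to verify the four identities $\|[[\eta_j]]_{AZ}\|=k$ and the diagonal/side distances
\begin{equation*}
\|[[\eta_1-\eta_3]]_{AZ}\|=\|[[\eta_2-\eta_4]]_{AZ}\|=2k,\quad \|[[\eta_i-\eta_{i+1}]]_{AZ}\|=k,
\end{equation*}
using the degenerating Hamilton sequences $\{\phi_{n_{2m-1}}\}$ and $\{\phi_{n_{2m}}\}$ furnished by the example. Specifically, the upper bounds are immediate from the $L^\infty$ norms, while the matching lower bounds come from pairing each difference against the appropriate subsequence of $\{\phi_n\}$, as in the computation (\ref{Eq:sakan1}). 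Next, I would produce the geodesic side connecting $[[\eta_1]]_{AZ}$ and $[[\eta_2]]_{AZ}$ by the linear interpolation
\begin{equation*}
\mu_t(z)=\begin{cases}(k-t)\kappa(z)/k,& z\in E_{2m-1},\ m\geq 1,\\ t\kappa(z)/k,& z\in E_{2m},\ m\geq 1,\end{cases}\qquad t\in[0,k],
\end{equation*}
and verify $\|[[\mu_s-\mu_t]]_{AZ}\|=|s-t|$ for every $s,t\in[0,k]$. The other three sides are constructed by the same rule, using the $\pm$ signs appropriate to the neighboring vertices.

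The main technical step, and the only place the argument is nontrivial, is the identity $\|[[\mu_s-\mu_t]]_{AZ}\|=|s-t|$. The $L^\infty$ norm of $\mu_s-\mu_t$ equals $|s-t|$ by construction, which gives the upper bound; the lower bound requires exhibiting a degenerating sequence in $\q1(\de)$ whose pairing against $\mu_s-\mu_t$ tends to $|s-t|$, and this is precisely what $\{\phi_{n_{2m}}\}$ (respectively $\{\phi_{n_{2m-1}}\}$) does, by the same argument that shows $\phi_{n_j}$ is a Hamilton sequence for $\kappa$. The final step concatenates four such geodesic sides into a closed curve of total length $4k$ through $[[\eta_1]]_{AZ}\to[[\eta_2]]_{AZ}\to[[\eta_3]]_{AZ}\to[[\eta_4]]_{AZ}\to[[\eta_1]]_{AZ}$ and checks locally shortest property from the side lengths.

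For the non-convexity of spheres, consider the sphere $\Sigma$ of radius $k$ centered at $[[\eta_1]]_{AZ}$. The points $[[\eta_2]]_{AZ}$, $[[\eta_4]]_{AZ}$ and $\oaz$ all lie on $\Sigma$. There are two geodesics joining $[[\eta_2]]_{AZ}$ to $[[\eta_4]]_{AZ}$: one passing through $\oaz$, which lies on $\Sigma$ (hence not strictly inside), and one through $[[\eta_3]]_{AZ}$, which lies strictly outside $\Sigma$ since $\|[[\eta_3-\eta_1]]_{AZ}\|=2k>k$. A small perturbation of the first geodesic into the interior then produces a geodesic joining two points of $\Sigma$ but leaving the closed ball, contradicting convexity. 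The obstacle that needs the most care is not this geometric conclusion but the degenerating-pairing computation mentioned above; once that is in hand, the rest is a direct transcription of the proof of Theorem \ref{Th:close} into the linear (infinitesimal) setting.
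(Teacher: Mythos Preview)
Your construction of the closed geodesic is correct and is exactly the infinitesimal transcription of Theorem \ref{Th:close} that the paper intends (the paper gives no separate proof, only the statement). The linear interpolation $\mu_t=(1-t/k)\eta_1+(t/k)\eta_2$ is the right replacement for the hyperbolic $\sigma(t)=\frac{k-t}{1-tk}$ once one passes to the normed tangent space, and your verification $\|[[\mu_s-\mu_t]]_{AZ}\|=|s-t|$ via the degenerating sequences $\{\phi_{n_{2m}}\}$, $\{\phi_{n_{2m-1}}\}$ is precisely the argument needed.

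The non-convexity step, however, is muddled. Your ``first'' geodesic through $\oaz$ lies \emph{on} the sphere $\Sigma$, and the proposed ``perturbation into the interior'' does not yield a geodesic leaving the closed ball---that sentence is simply wrong. The cleaner route, and the direct analog of the paper's argument, is to take as the inner geodesic $[[\eta_2]]_{AZ}\to[[\eta_1]]_{AZ}\to[[\eta_4]]_{AZ}$, which passes through the \emph{center} of the ball (distance $0<k$), while the outer geodesic $[[\eta_2]]_{AZ}\to[[\eta_3]]_{AZ}\to[[\eta_4]]_{AZ}$ passes through a point at distance $2k>k$. In fact the outer geodesic alone already shows the closed ball is not geodesically convex; no perturbation is needed.
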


\renewcommand\refname{\centerline{\Large{R}\normalsize{EFERENCES}}}
\medskip
\addcontentsline{toc}{section}{\bf\large{R}\normalsize{EFERENCES}}

\noindent \textit{Guowu Yao}\\
Department of Mathematical Sciences\\
  Tsinghua University\\Beijing,  100084,  People's Republic of
  China \\
  E-mail: \texttt{gwyao@math.tsinghua.edu.cn}
\end{document}